\newtheorem{theorem}{Theorem}
\newtheorem{lemma}[theorem]{Lemma}
\newtheorem{corollary}[theorem]{Corollary}
\newtheorem{remark}[theorem]{Remark}
\def \dt{h}
\def \vrho{\varrho}
\def \veps{\varepsilon}
\def \DL{\Delta L}
\def \w{\boldsymbol{\omega}}
\def \tauv{\boldsymbol{\tau}}
\newcommand{\bR}{\mathbb{R}}
\renewcommand{\Re}{\operatorname*{Re}}
\renewcommand{\Im}{\operatorname*{Im}}
\newcommand{\tol}{\mathrm{tol}}
\newcommand{\bone}{\mathbbm{1}}
\newcommand{\mi}{\mathrm{i}}
\newcommand{\opt}{\text{opt}}
\def \bv{{\mathbf{b}}}
\def \cv{{\mathbf{c}}}
\def \Wv{{\mathbf{W}}}
\def \A{{\cal O}\!\!\iota}
\newcommand{\omegav}{\boldsymbol{\omega}}
\newcommand{\Deltav}{\boldsymbol{\Delta}}
\newcommand{\Iv}{{\mathbf{I}}}
\newcommand{\uv}{{\mathbf{u}}}
\newcommand{\vv}{{\mathbf{v}}}
\newcommand{\Uv}{{\mathbf{U}}}
\newcommand{\Fv}{{\mathbf{F}}}
\newcommand{\Av}{{\mathbf{A}}}
\newcommand{\Bv}{{\mathbf{B}}}
\newcommand{\fv}{{\mathbf{f}}}
\newcommand{\ev}{{\mathbf{e}}}
\newcommand{\qv}{{\mathbf{q}}}
\newcommand{\tv}{{\mathbf{t}}}
\newcommand{\Ev}{{\mathbf{E}}}
\newcommand{\Yv}{{\mathbf{Y}}}
\newcommand{\Vv}{{\mathbf{V}}}
\newcommand{\Dv}{{\mathbf{D}}}
\newcommand{\ptv}{\boldsymbol{\partial}}
\def \Qv{{\mathbf{Q}}}
\begin{document}

\title{Efficient high order algorithms for fractional integrals and fractional differential equations}
\author{L. Banjai  \thanks{The Maxwell Institute for Mathematical Sciences, School of Mathematical \& Computer Sciences, Heriot-Watt University, Edinburgh EH14 4AS, UK. ({\tt l.banjai@hw.ac.uk})} \and M. L\'opez-Fern\'andez\thanks{Dipartimento di Matematica Guido Castelnuovo, Sapienza Universit\`a di Roma, Piazzale Aldo Moro 5, 00185 Roma, Italy ({\tt lopez@mat.uniroma1.it}) The work of the second author was partially supported by INdAM-GNCS, the Spanish grant MTM2016-75465, and by the Ram\'on y Cajal program of the Ministerio de Economia y Competitividad, Spain.}}
\maketitle

\begin{abstract}
We propose an efficient algorithm for the approximation of fractional integrals by using Runge--Kutta based convolution quadrature. The algorithm is based on a novel integral representation of the convolution weights and a special quadrature for it. The resulting method is easy to implement, allows for high order, relies on rigorous error estimates  and its performance in terms of memory and computational cost is among the best to date. Several numerical results illustrate the method and we describe how to apply the new algorithm to solve fractional diffusion equations. For a class of fractional diffusion equations we give the error analysis of the full space-time discretization obtained by coupling the FEM method in space with Runge--Kutta based convolution quadrature in time.
\end{abstract}

{\bf Keywords:} fractional integral, fractional differential equations, convolution quadrature, fast and oblivious algorithms.

{\bf AMS subject classifications:} 65R20, 65L06, 65M15,26A33,35R11.

\section{Introduction}
Fractional Differential Equations (FDEs) have nowadays become very popular for modeling different physical processes, such as anomalous diffusion \cite{Yus} or viscoelasticity \cite{Suecos_2008,Kar16}. In the present paper we develop a fast and memory efficient method to compute the fractional integral
\begin{equation}\label{fracint}
\mathcal{I}^{\alpha}[f](t)=\frac{1}{\Gamma(\alpha)}\int_0^t (t-s)^{\alpha-1} f(s)\,ds,
\end{equation}
for a given $\alpha \in (0,1)$. A standard discretization of \eqref{fracint} is obtained by convolution quadrature (CQ) based on a Runge-Kutta scheme \cite{LubOst,BanLu_2011}
\begin{equation}
  \label{eq:cq}
\mathcal{I}^{\alpha}[f](t_n) \approx \sum_{j=0}^{n} \w_{n-j} {\bf f}_j,
\end{equation}
where the convolution weights $\w_n$ can be expressed as, see Lemma~\ref{lem:int_cqWw},
\begin{equation}\label{eq:cqw1}
\w_n=\frac{h\sin (\pi \alpha)}{\pi} \int_0^\infty x^{-\alpha} {\bf e}_n(-\dt x)\,dx,
\end{equation}
with ${\bf e}_n(\cdot)$ a function that depends on the Runge-Kutta scheme. For discretizations based on linear multistep methods, see \cite{Lub_frac}.

To compute up to time $T = N\dt$ using formula \eqref{eq:cq} requires $O(N)$ memory and $O(N^2)$ arithmetic operations. Algorithms based on FFT can reduce the computational complexity to $O(N \log N)$ \cite{Lu_88II} or $O(N \log^2 N)$ \cite{hals}, but not the memory requirements; for an overview of FFT algorithms see \cite{lb_ms}. Here we develop algorithms that reduce the memory requirement to $O(|\log \veps|\log N)$ and the computational cost to $O(|\log \veps| N \log N)$, with $\veps$ the accuracy in the computation of the convolution weights.
 Hence, our algorithm has the same complexity as the fast and oblivious quadratures of \cite{LuScha} and \cite{SchaLoLu}, but as we will see, a simpler construction.

The algorithms will depend on an efficient quadrature of \eqref{eq:cqw1} for $n \geq n_0$, with a very moderate threshold value for $n_0$, say $n_0=5$. As ${\bf e}_n(z) = r(z)^n{\bf q}(z)$ and $r(z) = e^{z}+O(z^{p+1})$, where $p$ is the order of the underlying RK method, this is intimately related to the construction of an efficient quadrature for the integral representation of the convolution kernel
\begin{equation}\label{intkernel}
t^{\alpha - 1} = \frac{1}{\Gamma(1-\alpha)} \int_0^\infty x^{-\alpha} e^{-tx} dx,
\end{equation}
with $t \in [n_0\dt, T]$. Note that as $\Gamma(1-\alpha)\Gamma(\alpha) = \pi/\sin(\pi \alpha)$, $h^{-1}\w_n$ is an approximation of $\tfrac1{\Gamma(\alpha)} t^{\alpha-1}$, i.e., the kernel of \eqref{fracint}.%We want to use the same quadrature for all $t$.

Even though we eventually only require the quadrature for \eqref{eq:cqw1}, we begin with developing a quadrature formula for \eqref{intkernel} for a number of reasons: the calculation for \eqref{intkernel} is cleaner and easier to follow, such a quadrature allows for efficient algorithms that are not based on CQ, and finally once this is available the analysis for \eqref{eq:cqw1} is much shorter. The quadrature we develop  for \eqref{intkernel} is closely related to the one developed in \cite{Li}, the main difference being our treatment of the singularity at $x=0$ by Gauss-Jacobi quadrature and the restriction of $t$ to the finite interval rather than semi-infinite as used in \cite{Li}. Both these decisions allow us to substantially reduce constants in the above asymptotic estimates of memory and computational  costs. Recent references \cite{Bu17,JiangZhang,Ba18} also consider fast computation of \eqref{fracint}, but do not address the approximation of the convolution quadrature approximation exploiting \eqref{eq:cqw1}. Our main contribution here is the development of an efficient quadrature  to approximate \eqref{eq:cqw1} and its use in a fast and memory efficient scheme for computing the discrete convolution \eqref{eq:cq}.

The stability and convergence properties of RK convolution quadrature are well understood, see  \cite{LubOst,BanLM}.  This allows us to apply convolution quadrature not only to the evaluation of fractional integrals, but also to the solution of fractional subdiffusion or diffusion-wave equations of the form
\[
  \partial_t^{\beta} u -\Delta u = f, \qquad u^{(j)}(0) = 0,\ j = 0,\dots, m-1,
\]
with $\beta \in (0,2)$. Here,  $\partial_t^{\beta} = \mathcal{I}^{m-\beta}\partial_t^m$, with $m = \lceil\beta\rceil$, denotes the Caputo fractional derivative. Solutions of such equations typically have low regularity at $t = 0$,  but a discussion of adaptive or modified quadratures for this case is beyond the scope of the current paper. For a careful analysis of BDF2 based convolution quadrature of fractional differential equations see \cite{CuLuPa}.

%This in particular allows us to construct stable methods of arbitrarily high order, for sufficiently smooth and compatible right-hand sides $f$.
To our knowledge, underlying high order solvers for ODEs have been considered for the approximation of \eqref{fracint} only at experimental level in \cite{Ba18,BaHe17} and in \cite{SchaLoLu}, where a fast and oblivious implementation of RK based CQ is considered for more general applications than \eqref{fracint}. The fast and oblivious quadratures of \cite{LuScha} and \cite{SchaLoLu} have the same asymptotic complexity as our algorithm, but have a more complicated memory management structure and require the optimization of the shape of the integration contour. Our new algorithm has the advantage of being much easier to implement, as it does not require sophisticated  memory management and the optimization of quadrature parameters is much simpler, and furthermore only real arithmetic is required. The new method is also much better suited for the extension to variable steps  --- this will be investigated in a follow up work. On the other hand, the present algorithm is specially tailored to the application to \eqref{fracint} and related FDEs, whereas the algorithms in \cite{LuScha,SchaLoLu} allow for a wider range of applications.

The paper is organized as follows. In Section 2 we develop and fully analyze a special quadrature for \eqref{fracint}, which uses the same nodes and weights for every $t\in [n_0\dt,T]$. In Section 3, we recall Convolution Quadrature based on Runge--Kutta methods and derive the special representation of the associated weights already stated in \eqref{eq:cqw1}. In Section 4 we derive a special quadrature for \eqref{eq:cqw1}, which uses the same nodes and weights for every $n\in [n_0,N]$, with $T=\dt N$. In Section 5 we explain how to turn our quadrature for the CQ weights into a fast and memory saving algorithm. In Section 6 we test our algorithm with a scalar problem and in Section 7 we consider the application to a fractional diffusion equation.  We provide a complete error analysis of the discretization in space and time of a class of fractional diffusion equations.

\section{Efficient quadrature for $t^{\alpha-1}$}\label{sec:quadt_alpha}

In the following we fix an integer $n_0 > 0$, time step $\dt > 0$, and the final computational time $T > 0$. Throughout, the parameter $\alpha$ is restricted to the interval $(0,1)$. We develop an efficient quadrature for \eqref{intkernel} accurate for $t \in [n_0 \dt, T]$.

\subsection{Truncation}\label{sec:truncexp}
First of all we truncate the integral
\[
t^{\alpha - 1} = \frac{1}{\Gamma(1-\alpha)} \int_0^L x^{-\alpha} e^{-tx} dx
+ \tau(L),
\]
where $\tau(L)$ denotes the truncation error.

\begin{lemma}\label{lem:truncerr}
For $t \geq n_0 \dt$ and $L = A/\dt$ we have that
\begin{equation}
  \label{eq:remainder_int}
|\tau(L)| \le \frac{\dt^{\alpha-1}}{\Gamma(1-\alpha)}  \int_A^\infty x^{-\alpha}e^{-n_0 x} dx.
\end{equation}
\end{lemma}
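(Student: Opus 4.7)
The plan is very direct: start from the explicit form of the truncation error, use the hypothesis $t\ge n_0 h$ to replace the $t$-dependent exponential by a $t$-independent one, and then rescale to pull the step size $h$ out of the integral.

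First I would write
\[
\tau(L) = \frac{1}{\Gamma(1-\alpha)} \int_L^\infty x^{-\alpha} e^{-tx}\,dx,
\]
which follows immediately by subtracting the truncated integral from the full representation \eqref{intkernel}. Since $x\ge 0$ and $t\ge n_0 h$, the exponential satisfies $e^{-tx}\le e^{-n_0 h x}$, and $x^{-\alpha}>0$, so
\[
|\tau(L)|\le \frac{1}{\Gamma(1-\alpha)} \int_L^\infty x^{-\alpha} e^{-n_0 h x}\,dx.
\]

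Next I would change variable $y = hx$, $dy = h\,dx$. With $L = A/h$ the lower limit becomes $y = A$, and the factor $x^{-\alpha} = h^{\alpha} y^{-\alpha}$ combines with $dx = h^{-1}dy$ to give the prefactor $h^{\alpha-1}$, yielding
\[
\int_L^\infty x^{-\alpha} e^{-n_0 h x}\,dx = h^{\alpha-1}\int_A^\infty y^{-\alpha} e^{-n_0 y}\,dy,
\]
which is exactly the claimed bound. There is no real obstacle in the argument; the only point to take care of is that the two uses of the hypothesis ($t\ge n_0 h$ to dominate the exponential, and $L=A/h$ to set the lower limit after rescaling) fit together so that the surviving exponential decay rate is $n_0$, independent of $h$. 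This is precisely what makes the bound useful later for choosing $A$ of moderate size.
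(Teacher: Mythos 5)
Your proof is correct and follows essentially the same route as the paper: the paper rescales $x \mapsto x/\dt$ first and then uses $t \ge n_0\dt$ to dominate the exponential, whereas you dominate the exponential first and rescale second, which is an immaterial reordering. The paper simply continues one step further, pulling out $A^{-\alpha}$ to obtain the explicit closed-form bound $\frac{A^{-\alpha}\dt^{\alpha-1}e^{-n_0 A}}{n_0\Gamma(1-\alpha)}$ used in the subsequent remark, but the stated inequality is exactly what you have established.
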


\begin{proof}
\[
\begin{split}
|\tau(L)| &=  \frac{\dt^{\alpha-1}}{\Gamma(1-\alpha)} \int_A^\infty x^{-\alpha}e^{-\frac{t}{\dt}x}dx\\
&\leq \frac{\dt^{\alpha-1}A^{-\alpha}}{\Gamma(1-\alpha)} \int_A^\infty e^{-n_0 x}dx\\
&= \frac{A^{-\alpha}\dt^{\alpha-1}e^{-n_0 A}}{n_0\Gamma(1-\alpha)}.
\end{split}
\]
%Hence, we see that  \eqref{truncpara} implies $|\tau(L)| \leq \tol$.
\end{proof}

\begin{remark}
Given a tolerance $\tol > 0$, $|\tau(L)| \leq \tol$ if
\begin{equation}\label{truncpara}
  A+\frac{\alpha}{n_0}\log(A)  \ge  \frac1{n_0}\log\left(\frac {\dt^{\alpha-1}}{n_0\Gamma(1-\alpha) \tol} \right).
\end{equation}
Assuming $A\ge 1$ we can choose
$$
A = \log\left(\frac {1}{n_0\Gamma(1-\alpha) \tol} \right) + (1-\alpha)\log \left(\frac{1}{\dt} \right).
$$
However, in practice it is advantageous to use the bound \eqref{eq:remainder_int} to  numerically find the optimal $A$.
\end{remark}

\subsection{Gauss-Jacobi quadrature for the initial interval}\label{sec:gjexp}
We choose an initial integration interval
\[
I_0 = \frac{1}{\Gamma(1-\alpha)} \int_0^{L_0} x^{-\alpha} e^{-tx} dx,
\]
along which we will perform Gauss-Jacobi integration.

Recall the Bernstein ellipse $\mathcal{E}_{\vrho}$, which is given as the image of the circle of radius $\varrho > 1$ under the map $z \mapsto (z+z^{-1})/2$. The largest imaginary part on $\mathcal{E}_{\rho}$ is $(\varrho-\varrho^{-1})/2$ and the largest real part is $(\varrho+\varrho^{-1})/2$.

\begin{theorem}\label{th:gwerr}
  Let $f$ be analytic inside the Bernstein ellipse $\mathcal{E}_{\vrho}$ with $\vrho > 1$ and bounded there by $M$. Then the error of Gauss quadrature with weight $w(x)$ is bounded by
\[
|If-I_Q f| \leq 4M\frac{\vrho^{-2Q+1}}{\vrho-1}\int_{-1}^1 w(x)dx,
\]
where $If = \int_{-1}^1 w(x) f(x) dx$ and $I_Q f = \sum_{j = 1}^Q w_j f(x_j)$ is the corresponding Gauss formula, with weights $w_j > 0$.
\end{theorem}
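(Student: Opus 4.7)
The plan is to combine the exactness of $Q$-point Gauss quadrature on polynomials of degree up to $2Q-1$ with a quantitative Bernstein-type polynomial approximation estimate on $[-1,1]$ for functions analytic in $\mathcal{E}_{\vrho}$.

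First I would let $p \in \mathbb{P}_{2Q-1}$ be a polynomial approximation of $f$ on $[-1,1]$, to be chosen later, and write
\[
If - I_Q f = I(f-p) - I_Q(f-p),
\]
which uses the fact that Gauss quadrature with $Q$ nodes is exact on $\mathbb{P}_{2Q-1}$. Since the Gauss weights are positive and the quadrature is exact on the constant function $1$, we have $\sum_{j=1}^Q w_j = \int_{-1}^1 w(x)\,dx$, and hence
\[
|If - I_Q f| \;\le\; \bigl(\,|I\bone| + |I_Q \bone|\,\bigr)\,\|f-p\|_{L^\infty[-1,1]} \;=\; 2\int_{-1}^1 w(x)\,dx\,\cdot\,\|f-p\|_{L^\infty[-1,1]}.
\]

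The second step is to choose $p$ as a near-best polynomial approximation and invoke the standard Bernstein estimate: if $f$ is analytic inside $\mathcal{E}_{\vrho}$ and bounded there by $M$, then there exists $p \in \mathbb{P}_{2Q-1}$ with
\[
\|f-p\|_{L^\infty[-1,1]} \;\le\; \frac{2M\,\vrho^{-(2Q-1)}}{\vrho-1} \;=\; \frac{2M\,\vrho^{-2Q+1}}{\vrho-1}.
\]
This is the truncated Chebyshev expansion bound, derived via the Joukowski map that sends the circle of radius $\vrho$ to $\mathcal{E}_{\vrho}$, together with the Cauchy-type estimate $|a_k| \le 2M\vrho^{-k}$ on the Chebyshev coefficients of $f$ and geometric summation of the tail from $k = 2Q$ onwards. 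I would quote this estimate rather than re-derive it, since it is standard (e.g., Trefethen, \emph{Approximation Theory and Approximation Practice}, Thm.~8.2).

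Combining these two bounds gives
\[
|If - I_Q f| \;\le\; 2\int_{-1}^1 w(x)\,dx\,\cdot\,\frac{2M\,\vrho^{-2Q+1}}{\vrho-1} \;=\; 4M\,\frac{\vrho^{-2Q+1}}{\vrho-1}\int_{-1}^1 w(x)\,dx,
\]
which is the claimed bound. The only conceptual ingredients are the exactness of Gauss quadrature on $\mathbb{P}_{2Q-1}$, positivity of the weights (so that $|I_Q(f-p)|$ telescopes into the uniform norm), and Bernstein's polynomial approximation theorem. There is no real obstacle here; the result is essentially a textbook combination, and the slight care needed is only in tracking the constant $4/(\vrho-1)$ and the exponent $2Q-1$ (rather than $2Q$) to match the statement precisely.
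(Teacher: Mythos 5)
Your proposal is correct and follows essentially the same route as the paper: subtract the truncated Chebyshev expansion (a degree $2Q-1$ polynomial), use exactness of the $Q$-point Gauss rule on $\mathbb{P}_{2Q-1}$ together with positivity of the weights and exactness on constants, and invoke the Chebyshev coefficient bound $|a_k|\le 2M\vrho^{-k}$ to control the tail. The only cosmetic difference is that you quote the truncation estimate from Trefethen while the paper re-derives it in one line; the constants and exponent match exactly.
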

\begin{proof}
  A proof of this result for $w(x) \equiv 1$ can be found in \cite[Chapter 19]{Tre}. The same proof works for the weighted Gauss quadrature as well. We give the details next.

First of all note that we can expand $f$ in Chebyshev series
\[
f(x) = \sum_{k = 0}^\infty a_k T_k(x)
\]
with $|a_k| \leq 2M\vrho^{-k}$ \cite[Theorem 8.1]{Tre}. If we denote by $f_K(x) = \sum_{k = 0}^K a_k T_k(x)$ the truncated series then
\[
|f-f_K| \leq \frac{2M\vrho^{-K}}{\vrho-1}.
\]
As $I_Q$ is exact for polynomials of degree $2Q-1$, we have that
\[
\begin{split}
|If-I_Qf| &= \left| I(f-f_{2Q-1})-I_Q(f-f_{2Q-1}) \right| \\
&\leq \frac{2M\vrho^{-2Q+1}}{\vrho-1} \left(\int_{-1}^1 w(x) dx + \sum_{j = 1}^{2Q-1} w_j \right)\\
&= \frac{4M\vrho^{-2Q+1}}{\vrho-1}\int_{-1}^1 w(x) dx,
\end{split}
\]
where we have used the fact the weights are positive and integrate constants exactly.
\end{proof}

Changing variables  to the reference interval $[-1,1]$ we obtain
\[
I_0= \frac1{\Gamma(1-\alpha)}\left(\frac{L_0}{2}\right)^{1-\alpha} \int_{-1}^1 e^{-t(y+1)L_0/2}(y+1)^{-\alpha} dy.
\]
We apply Theorem~\ref{th:gwerr} to the case
\begin{equation}\label{fw}
f_0(x)=  \frac1{\Gamma(1-\alpha)}\left(\frac{L_0}{2}\right)^{1-\alpha}e^{-t(x+1)L_0/2}, \qquad w(x)=(1+x)^{-\alpha}
\end{equation}
and denote
$$
\tau_{\text{GJ}}(Q) := If_0-I_Qf_0 = \int_{-1}^1 f_0(x)w(x)\,dx - \sum_{j=1}^{Q}w_j f_0(x_j).
$$

\begin{theorem}\label{th:gjerr}
For $t \in [0,T]$ and any $Q \geq 1$ we have the bound
\[
|\tau_{\mathrm{GJ}}(Q)| \leq \frac{4L_0^{1-\alpha}}{\Gamma(2-\alpha)} \left(1+ \frac{TL_0}{4Q}\right) \left( \frac{eTL_0} {8Q}\right)^{2Q}.
\]
\end{theorem}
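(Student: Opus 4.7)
My plan is to apply Theorem~\ref{th:gwerr} to the pair $(f_0,w)$ from \eqref{fw} on a Bernstein ellipse $\mathcal{E}_{\varrho}$ with $\varrho>1$ and then to optimise over $\varrho$. Since $f_0$ is entire, every such $\varrho$ is admissible, and on $\mathcal{E}_\varrho$ the minimum of $\Re(x)$ is $-(\varrho+\varrho^{-1})/2$, so
\[
|f_0(x)| \le \frac{L_0^{1-\alpha}}{2^{1-\alpha}\,\Gamma(1-\alpha)}\, e^{-tL_0/2}\,e^{tL_0(\varrho+\varrho^{-1})/4}.
\]
Using $\int_{-1}^{1}(1+x)^{-\alpha}\,dx = 2^{1-\alpha}/(1-\alpha)$ and $(1-\alpha)\Gamma(1-\alpha)=\Gamma(2-\alpha)$, Theorem~\ref{th:gwerr} then delivers, for every $\varrho>1$,
\[
|\tau_{\mathrm{GJ}}(Q)| \le \frac{4\,L_0^{1-\alpha}}{\Gamma(2-\alpha)}\, e^{tL_0(\varrho+\varrho^{-1}-2)/4}\,\frac{\varrho^{-2Q+1}}{\varrho-1}.
\]

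In the principal regime $Q\ge TL_0/4$ I would take the $t$-independent radius $\varrho = 8Q/(TL_0)\ge 2$. Three elementary inequalities then reassemble into the target: (i) $\varrho^{-2Q}\,e^{tL_0\varrho/4} = (e^{t/T}\,TL_0/(8Q))^{2Q} \le (eTL_0/(8Q))^{2Q}$ because $t\le T$; (ii) $e^{tL_0(\varrho^{-1}-2)/4}\le 1$ since $\varrho^{-1}\le 1/2$; and (iii) $\varrho/(\varrho-1) = 1+TL_0/(8Q-TL_0) \le 1+TL_0/(4Q)$ because $8Q-TL_0\ge 4Q$ in this range. Substituting these three estimates into the general bound above yields precisely the claimed inequality.

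In the complementary regime $Q<TL_0/4$ the target is slack and follows from the triangle inequality: $\|f_0\|_{L^\infty[-1,1]}\le (L_0/2)^{1-\alpha}/\Gamma(1-\alpha)$ implies $|\tau_{\mathrm{GJ}}(Q)|\le 2L_0^{1-\alpha}/\Gamma(2-\alpha)$, whereas $1+TL_0/(4Q)>2$ and $(eTL_0/(8Q))^{2Q}>(e/2)^{2Q}\ge 1$ force the right-hand side of the claim to be at least $8L_0^{1-\alpha}/\Gamma(2-\alpha)$.

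The main difficulty is the triple balance in the principal regime: $\varrho^{-2Q}$ must counteract the exponential growth of $|f_0|$ on $\mathcal{E}_\varrho$, while simultaneously $(\varrho-1)^{-1}$ has to stay cleanly bounded by $TL_0/(4Q)$. The choice $\varrho = 8Q/(TL_0)$ is the $t$-uniform analogue of the true minimiser $\varrho = 8Q/(tL_0)$ of the leading product $\varrho^{-2Q}e^{tL_0\varrho/4}$; locating this common optimum—instead of optimising in $t$ and $T$ separately and losing constants—is the only non-routine step.
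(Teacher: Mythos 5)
Your proof is correct, and it follows the paper's overall strategy: apply Theorem~\ref{th:gwerr} with the pair \eqref{fw} on a Bernstein ellipse, bound the exponential by its value at the leftmost point of the ellipse, and then tune $\varrho$ as a function of $Q/(TL_0)$. The only real difference is in the optimisation step: the paper minimises exactly, writing $\varrho=e^{\delta}$ and taking $\delta=\sinh^{-1}\bigl(4Q/(TL_0)\bigr)$, then recovers the stated constant via the identities for $\sinh^{-1}$ and $\cosh$ and the inequality $-1+\sqrt{1+x^2}\le x$; this works uniformly for all $Q\ge 1$ with no case distinction. You instead take the explicit near-optimal radius $\varrho=8Q/(TL_0)$, which is admissible only when $Q\ge TL_0/4$, and cover the complementary range $Q<TL_0/4$ by the trivial bound $|\tau_{\mathrm{GJ}}(Q)|\le 2L_0^{1-\alpha}/\Gamma(2-\alpha)$ (your estimates (i)--(iii) and the slackness argument in the small-$Q$ regime are all valid, using the positivity of the Gauss--Jacobi weights). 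Both routes give the identical final bound; yours avoids the hyperbolic-function calculus at the price of the case split, while the paper's exact minimiser is the one reused later in Theorem~\ref{th:errgj}, where the constraint $\varrho<\varrho_{\max}$ makes knowing $\varrho_{\mathrm{opt}}$ explicitly useful.
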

\begin{proof}
Since $f_0$ in \eqref{fw} is an entire function, by Theorem~\ref{th:gwerr} we can estimate
\begin{eqnarray*}
|\tau_{\text{GJ}}(Q)| &\le&  \frac4{\Gamma(1-\alpha)}\left(\frac{L_0}{2}\right)^{1-\alpha} \left(\int_{-1}^1 (1+x)^{-\alpha} \,dx \right)\, \min_{\varrho > 1} \left( \frac{\varrho^{-2Q+1}}{\rho-1} \max_{\xi \in \mathcal{E}_{\rho}} \left|e^{-t(\xi+1)L_0/2} \right| \right)\\
&= & \frac4{\Gamma(1-\alpha)}\frac{L_0^{1-\alpha}}{1-\alpha} \min_{\varrho > 1} \left( \frac{\varrho^{-2Q+1}}{\rho-1} \max_{\xi \in \mathcal{E}_{\rho}} e^{-t(\Re\xi+1)L_0/2} \right)\\
&=& \frac{4L_0^{1-\alpha}}{\Gamma(2-\alpha)} \min_{\varrho > 1} \left( \frac{\varrho^{-2Q+1}}{\rho-1} e^{t(\vrho+\vrho^{-1}-2)L_0/4} \right)\\
&\leq&  \frac{4L_0^{1-\alpha}}{\Gamma(2-\alpha)} \min_{\varrho > 1} \left( \frac{\varrho^{-2Q+1}}{\rho-1} e^{T(\vrho+\vrho^{-1}-2)L_0/4} \right).
\end{eqnarray*}

%Note that if we make $L_0$ very small the above is small just with  the choice $\varrho = 1$ and $Q = 0$ -- this is the choice in \cite{Li}.

Let $\varrho = e^{\delta}$ with $\delta > 0$. Then the error bound can be written as
\[
|\tau_{\text{GJ}}(Q)| \le \frac{4L_0^{1-\alpha}}{\Gamma(2-\alpha)} \min_{\delta > 0} \frac{e^{\delta}}{e^{\delta}-1} e^{-2Q\delta + L_0 T \left(\cosh \delta -1 \right)/2}.
\]
We now choose $\delta$ so that it maximises the function
\[
g(\delta) = 2Q\delta - L_0T\left(\cosh \delta -1 \right)/2.
\]
As
\[
g'(\delta) = 2Q-L_0T\sinh \delta/2, \quad g''(\delta) = -L_0T\cosh \delta /2 < 0,
\]
we have a maximum at
\[
2Q-L_0 T \sinh \delta/2= 0 \implies \delta = \sinh^{-1}\left(\frac{4Q}{TL_0}\right).
\]

Using the identities
$$
\sinh^{-1}y = \log\left( y+\sqrt{1+y^2} \right), \quad \cosh x = \sqrt{1+\sinh^2 x},
$$
we derive an error estimate with the above choice of $\delta$:
\[
\begin{split}
|\tau_{\text{GJ}}(Q)| &\le  \frac{4L_0^{1-\alpha}}{\Gamma(2-\alpha)} \left(1+ \frac{TL_0}{4Q} \right) e^{-2Q\delta  +L_0T(\cosh \delta-1)/2} \\
& \le \frac{4L_0^{1-\alpha}}{\Gamma(2-\alpha)} \left(1+ \frac{TL_0}{4Q}\right) \left( \frac{TL_0} {8Q}\right)^{2Q} e^{L_0T \left(-1+\sqrt{1+(4Q/(TL_0))^2}\right)/2}\\
&\leq \frac{4L_0^{1-\alpha}}{\Gamma(2-\alpha)} \left(1+ \frac{TL_0}{4Q}\right) \left( \frac{TL_0} {8Q}\right)^{2Q} e^{2Q},
%\\
%&\leq \frac{L_0^{1-\alpha}}{1-\alpha} \left(1+ \frac{TL_0}{4Q}\right) \left( \frac{eTL_0} {8Q}\right)^{2Q},
\end{split}
\]
where in the last  step above we have used that $-1+\sqrt{1+x^2} \leq x$ for $x > 0$.  This gives the stated result.
\end{proof}

\subsection{Gauss quadrature on increasing intervals}
We next split the remaining integral as
$$
\frac1{\Gamma(1-\alpha)}\int_{L_{0}}^{L} x^{-\alpha}e^{-xt} \,dx = \sum_{j = 1}^J I_j,
$$
where
\[
\begin{split}
I_j &=\frac1{\Gamma(1-\alpha)}\int_{L_{j-1}}^{L_{j}} x^{-\alpha}e^{-xt} \,dx \\
&= \frac{\DL_j}{2\Gamma(1-\alpha)}e^{-L_{j-1} t} \int_{-1}^1 \left( L_{j-1} + \frac{\DL_j}{2} (y+1) \right)^{-\alpha} e^{-t(y+1)\DL_j/2} \,dy,
\end{split}
\]
where $\DL_j = L_j-L_{j-1}$, $j=1,\dots,J$, with  $L_J = L$. The intervals are chosen so that for some $B \geq 1$, $\DL_j = BL_{j-1}$, i.e., $L_j = (B+1)L_{j-1}$ and $J= \lceil \log_{B+1} L/L_0\rceil$. To each integral we apply standard, i.e., $w(x) \equiv 1$ in Theorem~\ref{th:gwerr}, Gauss quadrature with $Q$ nodes and denote the corresponding error by
\[
\tau_j(Q) := If_j-I_Qf_j
\]
with
\begin{equation}
  \label{eq:f2}
f_j(x) = \frac{4\DL_j}{\Gamma(1-\alpha)}e^{-L_{j-1} t} \left( L_{j-1} + \frac{\DL_j}{2} (x+1) \right)^{-\alpha} e^{-t(x+1)\DL_j/2}.
\end{equation}

\begin{theorem}\label{th:gerr}
For any $Q \geq 1$ and $t \geq 0$
\[
|\tau_j(Q)| \le \frac{4B L^{1-\alpha}_{j-1}}{\Gamma(1-\alpha)}\min_{0<\veps<1} \frac{g(\veps,B)^{-2Q+1}}{g(\veps,B)-1} \veps^{-\alpha}e^{-tL_{j-1}\veps},
\]
with
\[
g(\veps,B) = 1+\frac 2B (1-\veps) + \sqrt{ \left(1+\frac 2B (1-\veps)\right)^2 -1}.
\]
\end{theorem}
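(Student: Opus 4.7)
The plan is to apply Theorem~\ref{th:gwerr} with $w(x)\equiv 1$, so the error bound reduces to finding an optimal Bernstein ellipse $\mathcal{E}_\vrho$ on which $f_j$ is holomorphic, together with a good bound on $\sup_{\mathcal{E}_\vrho}|f_j|$. The function in \eqref{eq:f2} has only one obstruction to being entire, namely the branch point of $\bigl(L_{j-1}+\tfrac{\DL_j}{2}(x+1)\bigr)^{-\alpha}$ at $x=-1-2L_{j-1}/\DL_j=-1-2/B$ (using $\DL_j=BL_{j-1}$). Hence I need to pick $\vrho$ so that $\mathcal{E}_\vrho$ stays strictly to the right of this branch point.

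First I would parameterize the separation from the singularity by $\veps\in(0,1)$. Since the leftmost point of $\mathcal{E}_\vrho$ on the real axis is $-(\vrho+\vrho^{-1})/2$, requiring this to equal $-1-2(1-\veps)/B$ yields
$$
\frac{\vrho+\vrho^{-1}}{2}=1+\frac{2}{B}(1-\veps).
$$
Solving the quadratic $\vrho^{2}-2u\vrho+1=0$ with $u:=1+2(1-\veps)/B$ gives $\vrho=u+\sqrt{u^{2}-1}=g(\veps,B)$, exactly as in the statement. This is the natural one-parameter family of admissible ellipses.

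Next I would estimate each factor of $f_j$ on $\mathcal{E}_\vrho$. For $z:=L_{j-1}+\tfrac{\DL_j}{2}(x+1)$, the minimum of $\real z$ on $\mathcal{E}_\vrho$ is attained at $\real x=-(\vrho+\vrho^{-1})/2$ and, using $\DL_j=BL_{j-1}$, equals $L_{j-1}\veps>0$. On the principal branch this yields $|z^{-\alpha}|=|z|^{-\alpha}\le(\real z)^{-\alpha}\le (L_{j-1}\veps)^{-\alpha}$, since $|z|\ge\real z>0$ and $\alpha>0$. For the exponential factor, $|e^{-t(x+1)\DL_j/2}|$ attains its maximum at the leftmost real point of $\mathcal{E}_\vrho$, producing $e^{tL_{j-1}(1-\veps)}$; multiplying by the prefactor $e^{-L_{j-1}t}$ already present in \eqref{eq:f2} collapses this to $e^{-tL_{j-1}\veps}$. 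Combining the two and using $\DL_j=BL_{j-1}$ gives
$$
M:=\sup_{\mathcal{E}_\vrho}|f_j|\le \frac{C\,BL_{j-1}^{1-\alpha}}{\Gamma(1-\alpha)}\,\veps^{-\alpha}e^{-tL_{j-1}\veps},
$$
for a harmless absolute constant $C$. Inserting this $M$ and $\vrho=g(\veps,B)$ into Theorem~\ref{th:gwerr} (with $\int_{-1}^{1}dx=2$) and taking the infimum over $\veps\in(0,1)$ yields the claimed estimate.

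The main obstacle I anticipate is the bookkeeping around the algebraic factor: one has to choose the branch of $z^{-\alpha}$, confirm that $\real z>0$ uniformly on $\mathcal{E}_\vrho$ under the chosen parameterization, and then show that the sharper bound $|z|^{-\alpha}\le(\real z)^{-\alpha}$ holds, which is precisely where positivity of $\real z$ becomes indispensable. The exponential bound, by contrast, is a direct application of the fact that $\real\xi$ on $\mathcal{E}_\vrho$ attains its extrema at the real points, a fact already exploited in the proof of Theorem~\ref{th:gjerr}.
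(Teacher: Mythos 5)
Your proposal is correct and follows essentially the same route as the paper: restrict the Bernstein ellipse so that its leftmost point stays a relative distance $\veps$ from the branch point, note that this forces $\vrho=g(\veps,B)$, bound the algebraic factor by $(L_{j-1}\veps)^{-\alpha}$ and the exponential (combined with the prefactor $e^{-L_{j-1}t}$) by $e^{-tL_{j-1}\veps}$, and insert these into Theorem~\ref{th:gwerr}; the paper merely parameterizes in the opposite direction, defining $\veps(\vrho)$ and inverting via $\cosh^{-1}$. The only gap is cosmetic: you leave the prefactor as an unspecified constant $C$, whereas tracking it (the factor $4$ from Theorem~\ref{th:gwerr}, $\int_{-1}^1 dx=2$, and $\DL_j/2=BL_{j-1}/2$) gives exactly the stated constant $4BL_{j-1}^{1-\alpha}/\Gamma(1-\alpha)$.
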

\begin{proof}
Note that the integrand $f_j$ in \eqref{eq:f2} is now not entire and
there will be a restriction $\vrho < \vrho_{\max}$  on the choice of the Bernstein ellipse $\mathcal{E}_{\vrho}$ in order to avoid the singularity of the fractional power. In particular we require
$$
L_{j-1}-\frac{\Delta L_j}{4}(\vrho+\vrho^{-1}-2)
=L_{j-1}\left(1-\frac{B}{4}(\vrho+\vrho^{-1}-2)\right)
> 0,
$$
which is satisfied for $1<\vrho <\vrho_{\max}$ and
$$
%\vrho_{\max}=\exp\left[\cosh^{-1} \left( 1+ \frac{2}{B} \right)\right].
\vrho_{\max}=1+\frac2{B}(1+\sqrt{1+B}).
$$
Setting
$$
\veps(\vrho) = 1-\frac{B}{4}(\vrho+\vrho^{-1}-2)
$$
we see that $\veps \in (0,1)$ for $\vrho \in (1, \vrho_{\max})$ and that
\[
  L_{j-1}-\frac{\DL_j}{4}(\vrho+\vrho^{-1}-2) = L_{j-1}\veps.
  \]
  Hence
  \[
\begin{split}
|\tau_j(Q)| &\le \frac{4\DL_{j}}{\Gamma(1-\alpha)}L^{-\alpha}_{j-1} \min_{1<\vrho<\vrho_{\max}}  \frac{\vrho^{-2Q+1}}{\vrho-1}  \veps^{-\alpha}e^{-tL_{j-1}\veps}.
\end{split}
\]
The result is now obtained by using
$$
\cosh^{-1} y = \log(y+\sqrt{y^2-1}), \qquad y\ge 1,
$$
to show that $\vrho = g(\veps,B)$.
\end{proof}

\begin{remark}
Choosing for instance $\veps=0.1$ and $B=1$ in the above estimate, we obtain
$$
\vrho_{\max}=g(0,B)=3+2\sqrt{2}=5.83
$$
and
$$
|\tau_j(Q)| \le \frac{10^{\alpha}  L^{1-\alpha}_{j-1}\exp(-0.1t L_{j-1})}{1.1\Gamma(1-\alpha)} (5.41)^{-2Q+1}.
$$
As we will require a uniform bound for $t \in [t_{n_0},T]$, we can substitute $t = t_{n_0}$ in this estimate.
\end{remark}

\section{Runge--Kutta Convolution Quadrature}\label{subsec:rkcq}
Let us consider an $s$-stage Runge-Kutta method described by the coefficient matrix $\A =
(a_{ij})_{i,j=1}^s \in \bR^{s\times s}$, the vectors of weights $\bv =
(b_1,\ldots,b_s)^T \in \bR^s$ and the vector of abcissae $\cv = (c_1,\ldots,c_s)^T \in
[0,1]^s$. We assume that the method is $A$-stable, has classical order $p\ge 1$, stage order $q$ and satisfies $a_{s,j}=b_j$, $j=1,\dots,s$, \cite{HaWII}.
The corresponding stability function is given by
\begin{equation}\label{stabfun}
r(z) = 1+z\bv^T (\mathbf{I}-z \A)^{-1} \bone,
\end{equation}
where
\[
\bone = (1,1,\dots,1)^T.
\]
Our assumptions imply the following properties:
\begin{enumerate}
\item $c_s=1$.
    \item $r(\infty) = \bv^T \A^{-1}\bone-1 = 0$.
  \item $r(z) = e^z+O(z^{p+1})$
    \item $|r(z)| \leq 1$ for $\Re z \leq 0$.
  \end{enumerate}
Important examples of RK methods satisfying our assumptions are Radau
IIA and Lobatto IIIC methods.

Let us consider the convolution
\begin{equation}
  \label{eq:conv}
K(\partial_t) f := \int_0^t k(t-\tau) f(\tau) d\tau,
\end{equation}
where $K(z)$ denotes the Laplace transform of the convolution kernel $k(t)$. $K$ is assumed to be analytic for $\Re z > 0$ and bounded there as $|K(z)| \leq |z|^{-\mu}$ for some $\mu > 0$. The operational notation $K(\partial_t) f$  introduced in \cite{Lu_88I}, is useful in emphasising certain properties of convolutions. Of particular importance is the composition rule,  namely, if
$K(s) = K_1(s)K_s(s)$ then $K(\partial_t)f = K_1(\partial_t) K_2(\partial_t) f$.. This will be used when solving fractional differential equations in  Section~\ref{subsec:algFDE}.

If $\mu < 0$, the convolution is defined by
  \[
    K(\partial_t)f = \left(\frac{d}{dt}\right)^m K_m(\partial_t)f,
  \]
  where $K_m(z) = z^{-m} K(z)$ and $m$ smallest integer such that $m > -\mu$.

For $K(z) = z^{-\alpha}$ the convolution coincides with the fractional integral of order $\alpha$, i.e., according to the operational notation, we can write
\[
  \mathcal{I}^{\alpha}[f](t) = \partial_t^{-\alpha} f(t), \qquad t > 0, \;\alpha \in (0,1).
\]
For $\beta > 0$, $\partial_t^\beta$ is equivalent to the Riemann-Liouville fractional derivative,  see definition \eqref{eq:frac_der}.

Runge--Kutta convolution quadrature has been derived in \cite{LubOst} and applied to \eqref{eq:conv} provides approximations at time-vectors ${\bf t}_{n}=(t_{n,j})_{j=1}^s$, with $t_{n,j}=t_{n} + c_j \dt$ and $t_n=n\dt$, defined by
\begin{equation}
  \label{eq:rkcq_gen}
  K(\partial_t)f(\tv_{n})  \approx K(\ptv_t^{h})f(\tv_{n})
:=     \sum_{j=0}^{n} \Wv_{n-j}(K)\fv_j,
\end{equation}
where $(K(\partial_t)f(\tv_{n}))_{\ell} = K(\partial_t)f(t_{n,\ell})$, $(\fv_j)_\ell = f(t_{j,\ell})$ and the weight matrices $\Wv_j$ are the coefficients of the power series
\begin{equation}\label{powomega}
K\left( \frac{\Deltav(\zeta)}{h} \right) = \sum_{j=0}^{\infty} \Wv_j(K) \zeta^j
\end{equation}
with
\begin{equation}
  \label{eq:rksymbol}
\Deltav(\zeta)= \Bigl(\A + {\zeta \over 1-\zeta}\bone \bv^T\Bigr)^{-1}=
\A^{-1}-\zeta \A^{-1}\bone \bv^T \A^{-1}.
\end{equation}
The notation in \eqref{eq:rkcq_gen} again emphasises that the composition rule holds also after discretization:  if
$K(s) = K_1(s)K_s(s)$ then $K(\ptv_t^{h})f = K_1(\ptv_t^{h}) K_2(\ptv_t^{h}) f$.

The last row in \eqref{eq:rkcq_gen} defines the approximation at the time grid $t_{n+1}$, since $c_s=1$. Denoting $\omegav_j(K)$ the last row of $\Wv_j(K)$, the approximation reads
\begin{equation}\label{RKCQ}
  K(\partial_t)f(t_{n+1})  \approx K(\partial_t^{h})f(t_{n+1})
:= \sum_{j=0}^{n} \omegav_{n-j}(K)\fv_j, \quad (\fv_j)_\ell = f(t_{j,\ell}).
\end{equation}
For the rest of the paper we will denote by $\Wv_j = \Wv_j(K)$ and $\omegav_j = \omegav_j(K)$ the weights for the fractional integral case, i.e., for $K(z) = z^{-\alpha}$.

\begin{remark}[Notation]\label{rem:notation}
  We have defined the discrete convolution $K(\partial_t^h) f$ for functions $f$. For a sequence $\mathbf{f}_0, \dots, \mathbf{f}_N \in \mathbb{R}^s$, we use the same notation $K(\partial_t^h)\mathbf{f}$ to denote
  \[
    K(\partial_t^h)\mathbf{f}(t_{n+1}) = \sum_{j=0}^{n} \omegav_{n-j}(K)\mathbf{f}_j,\qquad  n = 0, \dots, N,
  \]
  and similarly for $K(\ptv_t^h)\mathbf{f}(\mathbf{t}_n)$ with the meaning
  \[
        K(\ptv_t^h)\mathbf{f}(\tv_n) = \sum_{j=0}^{n} \Wv_{n-j}(K)\mathbf{f}_j
      \]
      and
        \[
        K(\ptv_t^h)\mathbf{f}(t_{n,\ell}) = \left(\sum_{j=0}^{n} \Wv_{n-j}(K)\mathbf{f}_j\right)_\ell.
      \]
      Note also that
      \[
        K(\ptv_t^h)\mathbf{f}(t_{n,s}) =  K(\partial_t^h)\mathbf{f}(t_{n+1}).
        \]
\end{remark}

FFT techniques based on \eqref{powomega} can be applied to compute at once all the required $\Wv_j$, $j=0,\dots,N$, with $N=\lceil T/\dt \rceil$, \cite{Lu_88II}. The computational cost associated to this method is $O(N\log(N))$. It implies precomputing and keeping in memory all weight matrices for the approximation of every $\mathcal{I}^{\alpha}[f](t_n)$, $n=1,\dots,N$, see \cite{Ban10} for details and many experiments.

The following error estimate for the approximation of \eqref{fracint} by \eqref{RKCQ} is given by \cite[Theorem 2.2]{LubOst}. Notice that we allow $K(z)$ to be a map between two Banach spaces with appropriate norms denoted by $\|\cdot\|$ in the following. This will be needed in Section~\ref{sec:FDE}.

\begin{theorem}\label{th:LuOs}
Assume that there exist $c\in \bR$, $0<\delta<\frac{\pi}{2}$ and $M>0$ such that $K(z)$ is analytic in a sector $|\arg(z-c)|<\pi-\delta$ and satisfies there the bound $\|K(z)\| \le M|z|^{-\alpha}$. Then if $f\in C^{p}[0,T]$, there exists $\dt_0>0$ and $C>0$ such that for $\dt\le \dt_0$ it holds
\begin{align*}
\left\| K(\partial_t)f(t_n) -
K(\partial_t^{\dt})f(t_{n}) \right\|
&\le C h^p  \sum_{\ell=0}^q \left(1+t_n^{\alpha+\ell-p} \right)  \|f^{(\ell)}(0)\|
\\ & + C \left( h^p + h^{q+1+\alpha} |\log(h)|\right) \left( \sum_{\ell=q+1}^{p-1} \|f^{(\ell)}(0)\| + \max_{0\le \tau\le t_n} \|f^{(p)}(\tau)\| \right).
\end{align*}
%\begin{equation}\label{errcq_stages}
%\end{equation}

\end{theorem}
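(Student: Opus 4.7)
The plan is to follow the Lubich--Ostermann approach based on the operational/Hille--Phillips calculus, representing both the exact and the discrete convolution through inverse Laplace transforms on a Hankel contour. First I would split $f$ by a Taylor expansion around $t=0$,
\[
f(t) = \sum_{\ell = 0}^{p-1} \frac{t^\ell}{\ell!} f^{(\ell)}(0) + R_p(t),
\]
with $R_p^{(\ell)}(0) = 0$ for $\ell \le p-1$ and $R_p^{(p)}$ bounded. Linearity of $K(\partial_t)$ and $K(\partial_t^{\dt})$ reduces the proof to bounding the errors on the monomials $t^\ell$ and on $R_p$ separately.

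For the reduction to a contour estimate, I would write
\[
K(\partial_t)f(t_n) - K(\partial_t^{\dt})f(t_n) = \frac{1}{2\pi\mi}\int_\Gamma K(z) \bigl( e^{z t_n} - \mathcal{R}_n(z) \bigr)\, \tilde{f}(z)\, dz,
\]
where $\Gamma$ is a Hankel contour enclosing the sector of analyticity of $K$ and $\mathcal{R}_n(z)$ is the polynomial in $r(z\dt)$ produced by the generating function \eqref{eq:rksymbol} and \eqref{powomega}; this is the Runge--Kutta counterpart of the classical identity $e^{zt_n} = e^{z\dt\, n}$. The key analytical inputs are the bounds $|e^{z\dt} - r(z\dt)| \le C |z\dt|^{p+1}$ coming from classical order $p$, the stage order $q$ error on the internal stages of the form $O(|z\dt|^{q+1})$, and the stability $|r(z\dt)^n| \le C$ on the relevant portion of $\Gamma$ guaranteed by $A$-stability and $r(\infty)=0$. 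Plugging these into the contour integral, together with the sectorial bound $\|K(z)\| \le M|z|^{-\alpha}$, reduces everything to explicit integrals of the form $\int |z|^{-\alpha + m} e^{-c\, t_n |z|} d|z|$, which yield the powers of $h$ and $t_n$ in the statement.

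The dichotomy $\ell \le q$ versus $\ell \ge q+1$ in the theorem reflects the dichotomy between stage order and classical order. For $\ell \le q$, the expansion $K(\partial_t^{\dt})(t^\ell)$ is exact up to a term living near $t = 0$; optimizing the radius of $\Gamma$ as a function of $t_n$ and using $K(z) z^{-\ell-1}$ in the contour integral produces the nonsmooth factor $(1 + t_n^{\alpha + \ell - p})$, which is integrable in time precisely because $\alpha > 0$. For $q+1 \le \ell \le p-1$ and for $R_p$, the stage-order term dominates only through the logarithmic critical integral near the origin of $\Gamma$; balancing the classical-order contribution $\dt^p$ against the stage-order contribution $\dt^{q+1+\alpha}|\log \dt|$ (which comes from a $\int_{\dt}^{1} |z|^{-1-\alpha}\, d|z|$--type bound) gives the uniform-in-time constant multiplying the norms $\|f^{(\ell)}(0)\|$ and $\max_{[0,t_n]} \|f^{(p)}\|$.

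The main obstacle is handling the low-regularity layer at $t = 0$ uniformly: for $\ell \le q$ one must use a $t_n$-dependent contour radius to trade the exponential decay $e^{-c t_n |z|}$ against the polynomial growth in $|z|$ coming from $|z\dt|^{p+1}$, and one must verify that the resulting singular exponent $\alpha + \ell - p$ exactly reproduces the theorem. Everything else is careful bookkeeping with the Hankel integrals and the four listed properties of the stability function $r$; the argument is otherwise parallel to the scalar analysis in \cite{LubOst} and requires no new ideas beyond the sectorial calculus already built into the statement.
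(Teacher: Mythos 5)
The first thing to note is that the paper does not prove this theorem at all: it is quoted verbatim as \cite[Theorem 2.2]{LubOst}, so there is no ``paper proof'' to compare against, and what you have written is an outline of the argument in that cited reference rather than a self-contained alternative. Judged on its own terms, your sketch captures the right global strategy (Taylor expansion of $f$ at $t=0$, a sectorial contour representation, order/stage-order properties of the Runge--Kutta method, and the $\ell\le q$ versus $\ell\ge q+1$ dichotomy), but the central error representation is not correct as written. You represent the error as $\frac{1}{2\pi\mi}\int_\Gamma K(z)\bigl(e^{zt_n}-\mathcal{R}_n(z)\bigr)\tilde f(z)\,dz$ with $\tilde f$ a Laplace transform of $f$; but $f\in C^p[0,T]$ need not have a Laplace transform, and the discrete convolution does not factor as a multiplier acting on $\tilde f$. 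The Lubich--Ostermann route instead writes $K(\partial_t)f(t)=\frac{1}{2\pi\mi}\int_\Gamma K(z)\,y(t;z)\,dz$ where $y(\cdot;z)$ solves the scalar ODE $y'=zy+f$, $y(0)=0$, and identifies $K(\partial_t^{\dt})f(t_n)$ with the same contour integral with $y(t_n;z)$ replaced by its Runge--Kutta approximation; the theorem then follows from \emph{stiff} (uniform in $z$ on the contour) error estimates for the RK method applied to that ODE, which is exactly where the stage order $q$, the factor $h^{q+1+\alpha}|\log h|$, and the weights $t_n^{\alpha+\ell-p}$ come from.

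Related to this, several steps you invoke are asserted rather than available in the form you state: $|e^{z\dt}-r(z\dt)|\le C|z\dt|^{p+1}$ holds only for $|z\dt|$ in a neighbourhood of the origin, so the contour must be split and the regime $|z\dt|$ large handled via $A$-stability together with $r(\infty)=0$ (this is precisely what the uniform-in-$z$ RK error bounds encapsulate); and the logarithm in $h^{q+1+\alpha}|\log h|$ and the exponents $\alpha+\ell-p$ require the explicit computation of the quadrature error on monomials $t^\ell$ through these uniform bounds, not just a dimensional balance of $\dt^p$ against $\dt^{q+1+\alpha}$. So your proposal is a reasonable reading guide to \cite{LubOst}, but as a proof it has a genuine gap at the error representation and leaves the quantitatively decisive estimates unproved; since the paper itself simply cites the result, the honest conclusion is that both you and the paper ultimately rely on the full argument in \cite{LubOst}.
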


\subsection{Real integral representation of the CQ weights} \label{subsec:intomega}

The convolution quadrature weights $\w_j$ can also be expressed as \cite{SchaLoLu}
\begin{equation}
\w_n=\frac{\dt}{2\pi i} \int_{\Gamma} z^{-\alpha} \ev_n(\dt z)\,dz,
\end{equation}
for $\ev_n(\lambda)$ a function which depends on the ODE method underlying the CQ formula and an integration contour $\Gamma$ which can be chosen as a Hankel contour beginning and ending in the left half of the complex plane.

\begin{lemma}\label{lem:int_cqWw}
The weights are given by
\begin{equation}\label{lem:int_cqW}
\Wv_n=\frac{h\sin (\pi \alpha)}{\pi} \int_0^{\infty} x^{-\alpha} \Ev_n(-\dt x)\,dx,
\end{equation}
  and
\begin{equation}\label{lem:int_cqw}
\w_n= \frac{h\sin (\pi \alpha)}{\pi} \int_0^{\infty} x^{-\alpha} {\bf e}_n(-\dt x)\,dx,
\end{equation}
where
\begin{equation}\label{en}
\left( \Deltav(\zeta) -zI \right)^{-1}= \sum_{n=0}^{\infty}\Ev_n(z)\zeta^n
\end{equation}
and ${\bf e}_n(z)$ is the last row of $\Ev_n(z)$.

Explicit formulas for $\Ev_n$ and ${\bf e}_n$ are given by
\begin{equation}\label{Enrk}
\Ev_0 = \A(I-z\A)^{-1}, \qquad \Ev_n(z)=r(z)^{n-1}(I-z\A)^{-1}\bone \qv(z)
\end{equation}
and
\begin{equation}\label{enrk}
{\bf e}_n(z)=r(z)^n \qv(z),
\end{equation}
where $r$ is the stability function of the method and ${\bf q}(z)={\bf b}^T(I-z\A)^{-1}$.
\end{lemma}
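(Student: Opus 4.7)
The plan is to first reduce the Hankel-contour representation (recalled just above the statement) to a real integral on $[0,\infty)$, and then compute the matrices $\Ev_n(z)$ from the generating-function identity \eqref{en} explicitly via a rank-one Sherman--Morrison computation.

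For the real integral representation, applying the same derivation that produced $\w_n=\frac{\dt}{2\pi\mi}\int_\Gamma z^{-\alpha}\ev_n(\dt z)\,dz$ to the full matrix rather than to its last row yields
\[
  \Wv_n = \frac{\dt}{2\pi\mi}\int_\Gamma z^{-\alpha}\Ev_n(\dt z)\,dz.
\]
I would then deform $\Gamma$ onto the two sides of the branch cut $(-\infty,0]$. This is legitimate because the poles of $(\Deltav(\zeta)-\dt z\,I)^{-1}$ in $z$ lie in the right half-plane: by $A$-stability the spectrum of $\A^{-1}$ lies in $\{\Re w>0\}$, and in the disk of convergence of \eqref{powomega} a continuity argument keeps the spectrum of $\Deltav(\zeta)$ there as well. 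Parametrising the two sides of the cut as $z=xe^{\pm\mi\pi}$ and using $e^{\mi\pi\alpha}-e^{-\mi\pi\alpha}=2\mi\sin(\pi\alpha)$ collapses the integral to \eqref{lem:int_cqW}; taking the last row gives \eqref{lem:int_cqw}.

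For the explicit formulas I would exploit the rank-one splitting $\Deltav(\zeta) = \A^{-1} - \zeta\,\A^{-1}\bone\,\bv^T\A^{-1}$ from \eqref{eq:rksymbol}. Factoring
\[
  \Deltav(\zeta)-zI = (\A^{-1}-zI)\bigl(I - \zeta\,\uv\vv^T\bigr),\qquad \uv=(\A^{-1}-zI)^{-1}\A^{-1}\bone,\ \vv^T=\bv^T\A^{-1},
\]
and using $(\A^{-1}-zI)^{-1}=\A(I-z\A)^{-1}$ gives $\uv=(I-z\A)^{-1}\bone$. For the scalar $\vv^T\uv$, the algebraic identity $\A^{-1}(I-z\A)^{-1}=\A^{-1}+z(I-z\A)^{-1}$ combined with $\bv^T\A^{-1}\bone=1$ (equivalent to $r(\infty)=0$) yields $\vv^T\uv=r(z)$. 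Sherman--Morrison then gives
\[
  (\Deltav(\zeta)-zI)^{-1} = \A(I-z\A)^{-1} + \frac{\zeta}{1-\zeta r(z)}(I-z\A)^{-1}\bone\,\qv(z),
\]
where the rank-one numerator comes from $\vv^T(\A^{-1}-zI)^{-1}=\bv^T(I-z\A)^{-1}=\qv(z)$. Expanding $\zeta/(1-\zeta r(z))=\sum_{n\ge 1}r(z)^{n-1}\zeta^n$ and matching coefficients with \eqref{en} proves \eqref{Enrk}.

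The last-row formula \eqref{enrk} then follows from the stiff-accuracy condition $a_{sj}=b_j$: the last row of $\A$ equals $\bv^T$, hence the last row of $\A(I-z\A)^{-1}$ is $\qv(z)=r(z)^0\qv(z)$, while denoting the last component of $(I-z\A)^{-1}\bone$ by $y_s$ one has $y_s=1+z\bv^T(I-z\A)^{-1}\bone=r(z)$, so the last row of $(I-z\A)^{-1}\bone\,\qv(z)$ equals $r(z)\qv(z)$. Inserting into \eqref{Enrk} yields $\ev_n(z)=r(z)^n\qv(z)$ uniformly for $n\ge 0$. The main subtlety I expect is in the contour-deformation step: one must show not only that the integrand is free of singularities in $\bC\setminus(-\infty,0]$ --- which the explicit rank-one formula derived above confirms a posteriori, since the only poles of $\Ev_n(\dt z)$ are at reciprocals of eigenvalues of $\dt\A$ and at zeros of $1-\zeta r(\dt z)$, all in the right half-plane --- but also that the arc contributions at $0$ and $\infty$ vanish, for which one uses $|r(\cdot)|\le 1$ along the imaginary axis together with the integrability of $x^{-\alpha}$ at the origin.
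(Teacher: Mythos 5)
Your proposal is correct, and it is worth comparing with the paper's proof, which is much shorter: the paper only performs the contour degeneration of the Hankel representation onto the branch cut (citing Henrici's real inversion formula for the Laplace transform) and then simply \emph{cites} \cite{SchaLoLu} for the explicit formulas \eqref{Enrk}--\eqref{enrk}. Your first part is therefore the same argument as the paper's; your second part is a genuinely self-contained derivation that the paper outsources. The rank-one computation checks out: from \eqref{eq:rksymbol}, $\Deltav(\zeta)-zI=(\A^{-1}-zI)(I-\zeta\,\uv\vv^T)$ with $\uv=(I-z\A)^{-1}\bone$ and $\vv^T=\bv^T\A^{-1}$; the identity $\A^{-1}(I-z\A)^{-1}=\A^{-1}+z(I-z\A)^{-1}$ and $\bv^T\A^{-1}\bone=1$ give $\vv^T\uv=r(z)$, Sherman--Morrison plus the geometric expansion of $\zeta/(1-\zeta r(z))$ yields \eqref{Enrk}, and stiff accuracy $a_{sj}=b_j$ gives both that the last row of $\A(I-z\A)^{-1}$ is $\qv(z)$ and that the last entry of $(I-z\A)^{-1}\bone$ is $r(z)$, hence \eqref{enrk} for all $n\ge 0$. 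Two small corrections to your closing discussion: the coefficient matrices $\Ev_n(\dt z)$ have no singularities at ``zeros of $1-\zeta r(\dt z)$'' --- that denominator lives in the generating function \eqref{en}, not in its individual coefficients, whose only poles are at $z=1/(\dt\lambda)$ for eigenvalues $\lambda$ of $\A$, all in the right half-plane; and for the vanishing of the arc at infinity the bound $|r(z)|\le 1$ for $\Re z\le 0$ alone gives only boundedness, so you should invoke in addition the decay $\|\qv(z)\|=O(|z|^{-1})$ (cf.\ Lemma~\ref{lem:ebound}), which together with the factor $x^{-\alpha}$ makes the tail and the small circle around the origin negligible. Neither point affects the validity of the argument.
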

\begin{proof}
  Since $z^{-\alpha}$ is analytic in the whole complex plane but for the branch cut on the negative real axis, the Hankel contour $\Gamma$ can  be degenerated into negative real axis as in the derivation of the real inversion formula for the Laplace transform \cite[Section 10.7]{Henrici_II} to obtain
  \[
    \begin{split}
      \w_n &= \frac{\dt}{2\pi i} \int_0^{\infty} (e^{i\pi \alpha}-e^{-i\pi \alpha})x^{-\alpha} \ev_n(-\dt x)\,dx\\
      &= \frac{\dt \sin(\pi \alpha)}{\pi} \int_0^{\infty} x^{-\alpha} \ev_n(-\dt x)\,dx.
          \end{split}
  \]
The expression for $\Wv_n$ is obtained in the same way and the explicit formulas for $\Ev_n$ and ${\bf e}_n$ can be found in \cite{SchaLoLu}.
\end{proof}

The next properties will be used later in Section~\ref{sec:quadcqw}
\begin{lemma}\label{lem:rbound}
There exist constants $\gamma > 1$, $b > 0$ and  $C_{\qv} > 0$ such that
\[
|r(z)| \leq e^{\gamma \Re z}, \qquad \text{ for }\ 0 \leq \Re z \leq b,
\]
and
\[
\|{\bf q}(z)\| \leq C_{\qv}, \qquad \text{ for }\ \Re z \leq b,
 \]
where $C_{\qv}$ depends on the choice of the norm $\|\cdot \|$.
\end{lemma}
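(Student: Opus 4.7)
The plan is to prove the two bounds independently. Both $r$ and the entries of $\qv$ are rational functions, so the key tools are $A$-stability together with a Phragm\'{e}n--Lindel\"{o}f argument on a vertical strip.

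For the first inequality, $A$-stability implies that the rational function $r$ has no poles in $\{\Re z \le 0\}$. Since its poles are finite in number and $r(\infty) = 0$, there exists $b > 0$ such that $r$ is analytic and bounded on the closed strip $S_b := \{z \in \bC : \Re z \le b\}$, say by some $M > 0$. I would then introduce the auxiliary function
\[
\phi(z) = r(z)\,e^{-\gamma z}
\]
on the substrip $\{0 \le \Re z \le b\}$. On the left boundary, $|\phi(iy)| = |r(iy)| \le 1$ by $A$-stability; on the right boundary, $|\phi(b+iy)| \le M e^{-\gamma b}$. Choosing $\gamma$ large enough that both $\gamma > 1$ and $M e^{-\gamma b} \le 1$ (i.e., $\gamma \ge \max(1+\epsilon,\,(\log M)^+/b)$) makes $|\phi|\le 1$ on both boundaries. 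Since $\phi$ is bounded on the whole strip, Phragm\'{e}n--Lindel\"{o}f yields $|\phi(z)| \le 1$ throughout, which is exactly $|r(z)| \le e^{\gamma \Re z}$ for $0 \le \Re z \le b$.

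For the second inequality, each component of $\qv(z) = \bv^T(\Iv - z\A)^{-1}$ is a rational function whose poles lie among the reciprocals of the eigenvalues of $\A$. For the RK methods of interest (in particular Radau IIA and Lobatto IIIC), the eigenvalues of $\A$ have strictly positive real parts, so $\qv$ is analytic on a strip $\{\Re z \le b'\}$ for some $b' > 0$. Since $\qv(z) = -z^{-1}\bv^T\A^{-1} + O(z^{-2})$ as $|z|\to\infty$, continuity provides a uniform bound $\|\qv(z)\| \le C_{\qv}$ on the strip, with $C_{\qv}$ depending on the chosen vector norm. Replacing $b$ by $\min(b,b')$ ensures the two bounds hold simultaneously with the same $b$.

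The only technical subtlety is the invocation of Phragm\'{e}n--Lindel\"{o}f on an unbounded strip, but this is standard since $\phi$ is globally bounded there (both factors are). The rest of the argument is a direct consequence of $A$-stability and the rationality of $r$ and $\qv$.
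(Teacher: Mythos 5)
Your proposal is correct and rests on essentially the same mechanism as the paper's proof: control of $r$ on the strip $0\le\Re z\le b$ via $A$-stability on the imaginary axis and boundedness up to $\Re z=b$ (no poles there, $r(\infty)=0$), combined with the observation that $\qv$ is a pole-free rational function on $\Re z\le b$ decaying at infinity, hence bounded. The only difference is that the paper takes $\gamma$ to be the optimal constant $\sup_{0\le\Re z\le b}\tfrac{1}{\Re z}\log|r(z)|$, reduced to a boundary supremum by exactly the maximum-principle reasoning you make explicit through Phragm\'en--Lindel\"of, and later evaluates it numerically for specific methods, whereas your choice $\gamma\ge\max(1+\epsilon,(\log M)^+/b)$ merely certifies existence with a possibly larger constant.
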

\begin{proof}
Fix a $b > 0$ such that all the poles of $r(z)$ (and hence ${\bf q}(z)$) belong to $\Re z > b$. Define now
\begin{equation}
  \label{eq:gamma_opt}
\gamma = \sup_{0 \leq \Re z \leq b}  \frac{1}{\Re z} \log |r(z)|
= \max\left\{1,\frac{1}{b} \sup_{\Re z = b} \log |r(z)|\right\},
\end{equation}
where we have used the properties of $r(z)$ to see that $\sup _{\Re z = 0}  \frac{1}{\Re z}\log |r(z)| = 1$.

Recall that $\qv(z) = \bv^T(I-z\A)^{-1}$. As all the singularities of $\qv$ are in the half-plane $\Re z > b$ and $\|\qv(z)\| \rightarrow 0$ as $|z| \rightarrow \infty$, we have that $\|\qv(z)\|$ is bounded in the region $\Re z \leq b$.
\end{proof}

\begin{remark}
  \begin{enumerate}[(a)]
  \item   Note that for BDF1 we can choose $b \in (0,1)$. Hence, $\gamma = b^{-1}\log\frac1{1-b}$ and since $\qv(z) = r(z)$ for BDF1, we can set $C_{\qv} =  e^{\gamma b}$.
\item For the 2-stage Radau IIA method we have
\[
r(z) = \frac{2z+6}{z^2-4z+6}, \quad \qv(z) = \frac{1}{2(z^2-4z+6)}
\begin{bmatrix}
  9 & 3-2z
\end{bmatrix}.
\]
As the  poles of $r$ and $\qv$ are at $z = 2\pm \sqrt{2}\mi$, we can choose any $ b\in (0,2)$ and obtain the optimal $\gamma$ numerically using \eqref{eq:gamma_opt}. For example for $b = 1$, we can choose $\gamma \approx 1.0735$. Similarly we can compute $C_{\qv}$ by computing
\[
C_{\qv} = \sup_{\Re z = 0 \text{ or } \Re z = b} \|\qv(z)\|.
\]
For $b = 1$ and the Euclidian norm we have $C_{\qv} \approx 1.6429$.
Using the same procedure, for $b = 3/2$, we have $\gamma \approx 1.2617$ and $C_{\qv} \approx 3.3183$.
\item For the 3-stage Radau IIA method
  % \[
  %   \begin{split}
  %   r(z) &= \frac{1+2z/5+z^2/20}{1-3z/5+3z^2/20-z^3/60},\\
  %   \qv(z) &= \frac1{1-3z/5+3z^2/20-z^3/60}
  %   \begin{bmatrix}
  %     \frac{16-\sqrt{6}}{36}+\frac{8\sqrt{6}-3}{180}z &
  %     \frac{16+\sqrt{6}}{36}-\frac{8\sqrt{6}+3}{180}z &
  %     \frac19-\frac{z}{15}+\frac{z^2}{20}
  %   \end{bmatrix}.
  %       \end{split}
  %     \]
  the poles of $r(z)$ and $\qv(z)$ belong to $\Re z \geq \frac{9^{2/3}}{6}-\frac{9^{1/3}}2+3 \approx 2.681$. Choosing $b = 1$ gives $\gamma \approx 1.0117$ and $C_{\qv} \approx 1.1803$, whereas for $b = 1.5$ we obtain $\gamma \approx 1.0521$ and $C_\qv \approx 1.7954$.
  \end{enumerate}

\end{remark}

\begin{lemma}\label{lem:ebound}
  There exist  constants $c > 0$ and $x_0 > 0$ such that
\[
\max\{\|\ev_n(z)\|, \|\Ev_n(z)\|\} \leq |x_0-c\Re z|^{-n-1}, \qquad \text{for } \Re z <  0.
\]
\end{lemma}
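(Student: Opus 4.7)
The plan is to leverage the explicit formulas from Lemma~\ref{lem:int_cqWw}: $\ev_n(z) = r(z)^n\qv(z)$ and, for $n\geq 1$, $\Ev_n(z) = r(z)^{n-1}(I-z\A)^{-1}\bone\,\qv(z)$, together with $\Ev_0(z) = \A(I-z\A)^{-1}$. Each right-hand side factors as a product of at most $n+1$ building blocks drawn from the set $\{r(z),\ \qv(z),\ (I-z\A)^{-1}\bone,\ \A(I-z\A)^{-1}\}$, so it will suffice to establish a common pointwise estimate
\[
  |g(z)| \leq \frac{1}{x_0 - c\,\Re z} \qquad (\Re z \leq 0)
\]
for each building block $g$; multiplying these then yields the stated bound $(x_0 - c\Re z)^{-n-1}$ for both $\ev_n$ and $\Ev_n$ (splitting into $n$ copies of $r$ plus one $\qv$ for $\ev_n$; into $n-1$ copies of $r$ plus $(I-z\A)^{-1}\bone$ and $\qv$ for $\Ev_n$ with $n\geq 1$; and a single factor for $\Ev_0$).

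For each building block I would argue two things: (i) analyticity on the closed left half-plane, which follows from A-stability for $r$ and from the fact that the eigenvalues of $\A$ have positive real part (as is the case for Radau IIA and Lobatto IIIC) for the matrix-valued blocks; (ii) decay like $O(|z|^{-1})$ as $|z|\to\infty$ in that half-plane, which comes from the assumption $r(\infty) = \bv^T\A^{-1}\bone -1 = 0$ and from the Neumann expansion $(I-z\A)^{-1} = -z^{-1}\A^{-1} + O(|z|^{-2})$. Combined with continuity on the imaginary axis, these two facts yield a uniform bound $(1+|z|)\,|g(z)| \leq K_g$ on $\Re z\leq 0$ for each building block.

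Since $1+|z|\geq 1 - \Re z$ when $\Re z \leq 0$, this gives $|g(z)| \leq K_g/(1-\Re z)$. Letting $K$ be the maximum of the four constants $K_g$, any pair $x_0,c\in(0,1/K]$ satisfies $K(x_0 - c\Re z) \leq 1 - \Re z$ for all $\Re z\leq 0$; writing $\sigma=-\Re z\geq 0$, this is the affine inequality $Kx_0 - 1 \leq (1-Kc)\sigma$, which is straightforward to verify at $\sigma=0$ and as $\sigma\to\infty$. The resulting common estimate $|g(z)|\leq (x_0 - c\Re z)^{-1}$ can then be multiplied through the factorisations above to conclude.

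The main obstacle is not conceptual but rather the bookkeeping of the four building blocks: one must check their common analyticity and $O(|z|^{-1})$ decay and then pick $x_0,c$ uniformly so as to absorb all four constants $K_g$ into a single bound. Once this is done, the multiplicative structure of the formulas for $\ev_n$ and $\Ev_n$ makes the final assembly routine and simultaneously handles both the vector and matrix estimates claimed in the lemma.
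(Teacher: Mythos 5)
Your proposal is correct and follows essentially the same route as the paper's proof: factor $\ev_n$ and $\Ev_n$ into $n+1$ building blocks ($r$, $\qv$, and resolvent-type terms), bound each factor by $|x_0-c\Re z|^{-1}$ on the left half-plane using analyticity there (poles in $\Re z\geq \tilde x_0>0$) together with the $O(|z|^{-1})$ decay coming from $r(\infty)=0$, and multiply. The only difference is cosmetic bookkeeping: the paper states the per-factor bound directly as $C|\Re z-\tilde x_0|^{-1}$ and sets $x_0=\tilde x_0/C$, $c=1/C$, whereas you pass through a $(1+|z|)$-weighted bound and the inequality $1+|z|\geq 1-\Re z$.
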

\begin{proof}
  Using that $r(\infty) = \bv^T \A^{-1}\bone-1 = 0$ it can be shown that
  $r(z) = \bv^T \A^{-1}(I-z \A)^{-1}\bone$.  Let all eigenvalues of $\A^{-1}$ and hence all poles of $r(z)$, $\qv(z)$, and $(I-z\A)^{-1}$ lie in $\Re z \geq \tilde x_0 > 0$ . There exists a constant $C$ such that for all  $\Re z < 0$
  \[
    \max\{|r(z)|,\|\qv(z)\|, \|(I-z\A)^{-1}\| \} \leq C|\Re z-\tilde x_0|^{-1}
    \leq |x_0-c\Re z|^{-1},
\]
where we can set $x_0 = \frac{1}{C}\tilde   x_0$ and $c = \frac1C$.
\end{proof}

\begin{remark}
  \begin{enumerate}
  \item
    For BDF1, $c = 1$ and $x_0 = 1$.
\item  For 2-stage Radau IIA the constant can be obtained following the proof. Namely we choose $\tilde x_0 = 2$ and find numerically that
\[
\max\{|r(z)|, \|\qv(z)\|, \|(I-z\A)^{-1}\|\} |\Re z-\tilde x_0|  \leq 2, \qquad \Re z \leq 0.
\]
Hence we can choose $C = 2$ and $c = 1/2$ and $x_0 = 2/C = 1$.
\item Similarly, for 3-stage Radau IIA we choose $\tilde x_0 = 2.6811$ and find that
  \[
\max\{|r(z)|, \|\qv(z)\|, \|(I-z\A)^{-1}\|\} |\Re z-\tilde x_0|  \leq 3.0821, \qquad \Re z \leq 0.
\]
Hence we can choose $C = 3.0821$ and $c = 1/C = 0.3245$ and $x_0 = \tilde x_0/C = 0.8699$.
  \end{enumerate}
\end{remark}
In the rest of the Section our goal is to derive a good quadrature for the approximation of $\omegav_n$ and $\Wv_n$. We will perform the same steps as in Section 2 for the $\omegav_n$. The same quadrature rules will give essentially the same error estimates for the $\Wv_n$; see Remark~\ref{rem:W_comment}.

\section{Efficient quadrature for the CQ weights}\label{sec:quadcqw}

Analogously to the  the continuous case \eqref{fracint},  we fix $n_0$, $\dt$, and $T$ and develop an efficient quadrature for the CQ weights representation \eqref{lem:int_cqw}, for $n\dt \in [(n_0+1)\dt, T]$  and $\alpha \in (0,1)$.

\subsection{Truncation of the CQ weights integral representation}

Again we truncate the integral
\[
\w_n = \frac{\dt \sin(\pi \alpha)}{\pi} \int_0^{L} x^{-\alpha} \ev_n(-\dt x)\,dx
+ \tauv(L)
\]
and give a bound on the truncation error $\tauv(L)$.
\begin{lemma}
  With the choice $L = A\dt^{-1}$, the truncation error is bounded as
  \begin{equation}
    \label{eq:remainder_intw}
\|\tauv(L)\| \leq
 \frac{\dt^{\alpha}\sin(\pi \alpha)}{\pi}\int_A^\infty \|\ev_n(-x)\|x^{-\alpha}  dx
  \end{equation}
or more explicitly
\[
\|\tauv(L)\| \leq  \frac{\dt^{\alpha}\sin(\pi \alpha)}{cn\pi}A^{-\alpha}(x_0+cA)^{-n}.
\]
\end{lemma}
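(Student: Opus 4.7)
The plan is to start from the integral representation \eqref{lem:int_cqw} of $\omegav_n$, take the norm inside the tail integral that defines $\tauv(L)$, and then invoke Lemma~\ref{lem:ebound} to get explicit decay in the integrand. Concretely, the truncation error is
\[
  \tauv(L) = \frac{h\sin(\pi\alpha)}{\pi}\int_L^\infty x^{-\alpha}\,\ev_n(-h x)\,dx,
\]
so by the triangle inequality
\[
  \|\tauv(L)\| \leq \frac{h\sin(\pi\alpha)}{\pi}\int_L^\infty x^{-\alpha}\|\ev_n(-h x)\|\,dx.
\]

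The first stated bound \eqref{eq:remainder_intw} then follows by the change of variables $y = h x$, which turns the lower limit $L = A/h$ into $A$ and produces the factor $h^{\alpha}$ after combining the Jacobian $dx = dy/h$ with $x^{-\alpha} = h^{\alpha} y^{-\alpha}$. No estimate on $\ev_n$ has been used at this stage, just the scaling.

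For the second, explicit bound I would apply Lemma~\ref{lem:ebound} with $\Re z = -y < 0$, which gives
\[
  \|\ev_n(-y)\| \leq (x_0+cy)^{-n-1}, \qquad y > 0.
\]
Plugging this into \eqref{eq:remainder_intw} and pulling the factor $y^{-\alpha} \leq A^{-\alpha}$ out of the integral (since $y \geq A$) leaves
\[
  \|\tauv(L)\| \leq \frac{h^{\alpha}\sin(\pi\alpha)}{\pi}\,A^{-\alpha}\int_A^\infty (x_0+cy)^{-n-1}\,dy.
\]
The remaining integral is elementary: since $n \geq 1$,
\[
  \int_A^\infty (x_0+cy)^{-n-1}\,dy = \frac{1}{cn}(x_0+cA)^{-n},
\]
and multiplying through yields the claimed estimate.

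There is no real obstacle in this proof; the only small decision is whether to bound $y^{-\alpha}$ by $A^{-\alpha}$ before integrating (giving the clean closed form above) or to keep it inside and work with an incomplete-Beta-type expression. Since the first route already produces the exact bound stated in the lemma, that is the route I would take.
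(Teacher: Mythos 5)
Your proof is correct and follows essentially the same route as the paper: take norms inside the tail integral, rescale $y=\dt x$ to get \eqref{eq:remainder_intw}, then apply Lemma~\ref{lem:ebound}, bound $y^{-\alpha}\le A^{-\alpha}$, and evaluate the elementary integral $\int_A^\infty (x_0+cy)^{-n-1}\,dy = \tfrac{1}{cn}(x_0+cA)^{-n}$. No gaps; your write-up is in fact slightly cleaner than the paper's, whose intermediate line contains a stray $L^{-\alpha}$ factor where $\dt^{\alpha}$ is meant.
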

\begin{proof}
From Lemma~\ref{lem:ebound} we have that
\[
\begin{split}
\|\tauv(L)\| &\leq
 \frac{\dt^{\alpha}\sin(\pi \alpha)}{\pi}\int_A^\infty \|\ev_n(-x)\|x^{-\alpha}  dx\\
&\leq \frac{L^{-\alpha}\sin(\pi \alpha)}{\pi}\int_A^\infty (x_0+cx)^{-n-1}  dx\\
&= \frac{\dt^{\alpha}\sin(\pi \alpha)}{cn \pi}A^{-\alpha}(x_0+cA)^{-n}.
\end{split}
\]
\end{proof}

\begin{corollary}\label{coro:L}
  Let $L = A/\dt$. Given $\tol > 0$, choosing
\[
A > \left(\frac{\dt^\alpha\sin(\pi \alpha)}{ \tol \,n\pi c^{n+1}}\right)^{\frac{1}{n+\alpha}}
\]
ensures $\|\tauv(L)\| \leq \tol$. The estimate becomes uniform in $n>n_0$ by setting $n=n_0+1$ in the above error bound.
\end{corollary}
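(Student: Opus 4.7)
The plan is to simply combine the explicit bound from the preceding lemma with the trivial inequality $x_0+cA \geq cA$ (valid since $x_0 > 0$), and then solve for $A$. The corollary is a direct algebraic consequence, not a separate result requiring new machinery.

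More concretely, I would start from the explicit bound of the preceding lemma,
\[
\|\tauv(L)\| \leq \frac{\dt^{\alpha}\sin(\pi \alpha)}{cn\pi}A^{-\alpha}(x_0+cA)^{-n},
\]
and drop the $x_0$ term in the last factor, obtaining
\[
\|\tauv(L)\| \leq \frac{\dt^{\alpha}\sin(\pi \alpha)}{cn\pi}A^{-\alpha}(cA)^{-n} = \frac{\dt^{\alpha}\sin(\pi \alpha)}{c^{n+1}n\pi}\,A^{-(n+\alpha)}.
\]
Requiring the right-hand side to be at most $\tol$ is equivalent to
\[
A^{n+\alpha} \geq \frac{\dt^{\alpha}\sin(\pi \alpha)}{c^{n+1}\,n\pi\,\tol},
\]
which is precisely the stated condition on $A$.

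For the uniformity claim, I would note that the bound $A^{-\alpha}(x_0+cA)^{-n}$ is monotonically decreasing in $n$ for $A$ large enough (specifically once $x_0+cA \geq 1$, which is automatic when $A \geq 1/c$), so replacing $n$ by $n_0+1$ in the expression for $A$ yields a threshold that is simultaneously sufficient for all $n > n_0$. The only mild subtlety to flag is that the factor $1/(n\pi)$ in the prefactor also depends on $n$, but since $n \mapsto n^{-1/(n+\alpha)}$ is bounded and the dominant dependence comes from the $(n+\alpha)$-th root of the decreasing term, substituting $n = n_0+1$ does give a uniform bound.

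There is no real obstacle here; the proof is essentially one line of algebra once the lemma is in hand. The only thing worth being careful about is not to lose too much in the estimate $(x_0+cA)^{-n} \leq (cA)^{-n}$: for small $n_0$ and moderate $A$ this estimate is not tight, which is why in practice (as mentioned for $A$ in Section 2) one would numerically solve the sharp inequality rather than use the closed-form expression; I would add a brief remark to this effect.
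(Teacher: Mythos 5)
Your proposal is correct and follows exactly the paper's own argument: drop the $x_0$ term via $(x_0+cA)^{-n}\le (cA)^{-n}$ to get $\|\boldsymbol{\tau}(L)\|\le \frac{h^{\alpha}\sin(\pi\alpha)}{n\pi}A^{-\alpha-n}c^{-n-1}$ and solve for $A$. Your extra remarks on the uniformity in $n$ and on the looseness of the bound are accurate but not needed beyond what the paper states.
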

\begin{proof}
  We have from above
\[
\|\tauv(L)\| \leq  \frac{\dt^{\alpha}\sin(\pi \alpha)}{cn\pi}A^{-\alpha}|cA+z_0|^{-n}
\leq \frac{\dt^{\alpha}\sin(\pi \alpha)}{n\pi}A^{-\alpha-n} c^{-n-1},
\]
from which the result follows.
\end{proof}
\begin{remark}
In practice we find that instead of using Corollary~\ref{coro:L}, better results are obtained if a simple numerical search is done to find the optimal $A$ such that the right-hand side in \eqref{eq:remainder_intw} with $n = n_0+1$ is less than $\tol$. To do this, we start from $A=0$ and iteratively approximate the integral in \eqref{eq:remainder_intw} for increased values of $A$ ($A \leftarrow A+0.125$ in our code) until the resulting quantity is below our error tolerance. The approximation of the integrals is done by the MATLAB built-in routine {\tt integral}. Notice that this has to be done only once for each RK-CQ formula and value of $\alpha \in (0,1)$.
\end{remark}

\subsection{Gauss-Jacobi quadrature for the CQ weights}\label{sec:gjw}
In a similar way as in Section~\ref{sec:gjexp}, we consider the approximation of the integral
\[
\w_n = \frac{\dt\sin(\pi \alpha)}{\pi} \int_0^{\infty} x^{-\alpha} \ev_n(-x\dt)\,dx
\]
and investigate in the first place the approximation of
$$
\Iv_{0,n}:=\frac{\dt\sin(\pi \alpha)}{\pi} \int_0^{L_0} x^{-\alpha} \ev_n(-x\dt)\,dx,
$$
for some suitable $L_0>0$ by using Gauss-Jacobi quadrature. Changing variables as in Section~\ref{sec:gjexp} we obtain
\[
\Iv_{0,n}=\frac{\dt \sin(\pi \alpha)}{\pi}\left(\frac{L_0}{2}\right)^{1-\alpha} \int_{-1}^1 (y+1)^{-\alpha} \ev_n(-\dt (y+1)L_0/2)dy
\]
and apply Theorem~\ref{th:gwerr} to estimate the error
\[
  \tauv_{\mathrm{GJ},n}(Q) = \Iv_Q \fv_0-\Iv \fv_0
\]
with the weight $w(x) = (x+1)^{-\alpha}$  and integrand
\[
  \fv_0(x) = \frac{\dt \sin(\pi \alpha)}{\pi}\left(\frac{L_0}{2}\right)^{1-\alpha}  \ev_n(-\dt (x+1)L_0/2).
\]

\begin{theorem}\label{th:errgj}
  Let
  \[
  \vrho_{\max} = 1+\frac{2b}{L_0 \dt}
  + \sqrt{\left(\frac{2b}{L_0 \dt}\right)^2+\frac{4b}{L_0 \dt}},
\]
with $b$ and $\gamma$ from Lemma~\ref{lem:rbound}, and
\[
\vrho_\opt = \frac{4Q}{\gamma TL_0} + \sqrt{1+\left(\frac{4Q}{\gamma TL_0}\right)^2}.
\]
If $\vrho_\opt \in (1,\vrho_{\max})$, we have the bound
\[
  \|\tauv_{\mathrm{GJ},n}(Q)\| \leq  C_{\qv}\frac{\dt L_0^{1-\alpha}\sin(\pi \alpha)}{\pi(1-\alpha)}\left(1+ \frac{\gamma TL_0}{4Q}\right) \left( \frac{e\gamma TL_0} {8Q}\right)^{2Q}.
\]
Otherwise we have the bound
\[
  \|\tauv_{\mathrm{GJ},n}(Q)\| \leq C_{\qv}\frac{\dt L_0^{1-\alpha}\sin(\pi \alpha)}{\pi(1-\alpha)}
\left( \frac{\vrho_{\max}^{-2Q+1}}{\vrho_{\max}-1}
e^{\gamma T b/\dt} \right).
  \]
\end{theorem}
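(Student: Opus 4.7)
The plan is to follow the structure of the proof of Theorem~\ref{th:gjerr}, applying the weighted Gauss quadrature bound from Theorem~\ref{th:gwerr} to $\fv_0$ on the Bernstein ellipse $\mathcal{E}_\vrho$, but now taking care of two new complications: (i) unlike the entire integrand in the continuous case, $\ev_n(z) = r(z)^n\qv(z)$ has poles, which forces an upper constraint $\vrho<\vrho_{\max}$ on the allowable ellipses; and (ii) the bound on $|r(z)|^n$ is an exponential in $\gamma\,\Re z$ rather than a pure exponential in $\Re z$, which just shifts constants in the subsequent optimization.

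First, I would fix a Bernstein ellipse $\mathcal{E}_\vrho$ and identify the admissible range of $\vrho$. The change of variables $x\mapsto z=-\dt(x+1)L_0/2$ sends $\mathcal{E}_\vrho$ into the half-strip $\Re z \le \dt L_0(\vrho+\vrho^{-1}-2)/4$. Since all poles of $r$ and $\qv$ lie in $\Re z > b$ (Lemma~\ref{lem:rbound}), I require $\dt L_0(\vrho+\vrho^{-1}-2)/4 \le b$; solving this quadratic constraint in $\vrho$ yields exactly the $\vrho_{\max}$ stated. Under this constraint Lemma~\ref{lem:rbound} applies and gives, for any $z$ in the image of $\mathcal{E}_\vrho$,
\[
\|\ev_n(z)\| = |r(z)|^n\,\|\qv(z)\| \le C_{\qv}\,e^{\gamma n\,\max(\Re z,0)} \le C_{\qv}\,e^{\gamma n \dt L_0(\vrho+\vrho^{-1}-2)/4}.
\]

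Next, I would apply Theorem~\ref{th:gwerr} with weight $w(x)=(1+x)^{-\alpha}$ (so that $\int_{-1}^{1}w\,dx = 2^{1-\alpha}/(1-\alpha)$) and $M$ equal to the above bound multiplied by the prefactor $\tfrac{\dt\sin(\pi\alpha)}{\pi}(L_0/2)^{1-\alpha}$. Combining the two factors of $2^{1-\alpha}$ gives $L_0^{1-\alpha}$, and using $n\dt \le T$ in the exponent produces
\[
\|\tauv_{\mathrm{GJ},n}(Q)\| \le \frac{C_{\qv}\,\dt L_0^{1-\alpha}\sin(\pi\alpha)}{\pi(1-\alpha)} \min_{1<\vrho<\vrho_{\max}} \frac{\vrho^{-2Q+1}}{\vrho-1}\,e^{\gamma T L_0(\vrho+\vrho^{-1}-2)/4}.
\]

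Finally I would optimize in $\vrho$. Setting $\vrho=e^\delta$ and repeating the calculus argument of Theorem~\ref{th:gjerr} with $T$ replaced by $\gamma T$, the unconstrained minimizer is $\delta_\opt=\sinh^{-1}(4Q/(\gamma TL_0))$, i.e.\ $\vrho_\opt=4Q/(\gamma TL_0)+\sqrt{1+(4Q/(\gamma TL_0))^2}$, and the identity $-1+\sqrt{1+y^2}\le y$ converts the resulting exponential into the factor $e^{2Q}$ that yields the first stated bound. If $\vrho_\opt\ge\vrho_{\max}$, the function being minimized is monotone on $(1,\vrho_{\max})$ so the infimum is attained at the right endpoint, and the definition of $\vrho_{\max}$ gives $\gamma TL_0(\vrho_{\max}+\vrho_{\max}^{-1}-2)/4 = \gamma Tb/\dt$, producing the second bound. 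The only delicate part of the argument is verifying the admissibility constraint $\vrho<\vrho_{\max}$ and formulating the clean dichotomy in terms of $\vrho_\opt$; the remaining algebra is a direct transcription of the proof of Theorem~\ref{th:gjerr}.
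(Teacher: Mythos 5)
Your proposal is correct and follows essentially the same route as the paper's proof: the same constraint $\dt L_0(\vrho+\vrho^{-1}-2)/4\le b$ defining $\vrho_{\max}$, the same use of Lemma~\ref{lem:rbound} (together with $|r(z)|\le 1$ on $\Re z\le 0$) to bound $\|\ev_n\|$ on the mapped ellipse, and the same optimization with $\gamma T$ in place of $T$ leading to the dichotomy $\vrho_\opt$ versus $\vrho_{\max}$. The only superfluous element is your monotonicity argument at the endpoint: since any admissible $\vrho$ bounds the minimum from above, simply evaluating at $\vrho=\vrho_{\max}$ (as the paper does) already gives the second bound.
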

\begin{proof}
We again consider the Bernstein ellipse $\mathcal{E}_{\vrho}$ around $[-1,1]$, but now in order to be able to use Lemma~\ref{lem:rbound} and avoid the singularities of $\ev_n(z)$ in the right-half plane we have a restriction on $\vrho$. Namely, the maximal value of $\vrho$ is given by
$$
\dt\left(\vrho_{\max}+\vrho^{-1}_{\max}-2 \right) L_0/4= b,
$$
which implies, writing $\vrho_{\max}=e^{\delta_{\max}}$,
$$
\cosh(\delta_{\max})-1 = \frac{2b}{L_0 \dt},
$$
and thus
$$
\delta_{\max} = \cosh^{-1} \left(1+ \frac{2b}{L_0 \dt}\right)
$$
giving the expression for $\vrho_{\max}$ from the statement of the theorem.
The error estimate for Gauss-Jacobi quadrature then reads, by using Lemma~\ref{lem:rbound},
\[
\begin{split}
\left\|\tauv_{\mathrm{GJ},n}\right\| &\le \frac{\dt L_0^{1-\alpha}\sin(\pi \alpha)}{\pi(1-\alpha)} \min_{1<\vrho\leq\vrho_{\max}}
\left( \frac{\vrho^{-2Q+1}}{\vrho-1}
\max_{\zeta \in \mathcal{E}_{\rho}} \left\| \ev_n(-\dt(\zeta+1)L_0/2) \right\| \right)\\
& \le C_{\qv}\frac{\dt L_0^{1-\alpha}\sin(\pi \alpha)}{\pi(1-\alpha)} \min_{1<\vrho\leq\vrho_{\max}}
\left( \frac{\vrho^{-2Q+1}}{\vrho-1}
e^{\gamma t_n L_0 (\vrho +\vrho^{-1}-2)/4} \right) \\
&  \le C_{\qv}\frac{\dt L_0^{1-\alpha}\sin(\pi \alpha)}{\pi(1-\alpha)} \min_{1<\vrho<\vrho_{\max}}
\left( \frac{\vrho^{-2Q+1}}{\vrho-1}
e^{\gamma T L_0 (\vrho +\vrho^{-1}-2)/4} \right).
\end{split}
\]
Proceeding as in Section~\ref{sec:gjexp} with $\gamma T$ in place of $T$ we obtain the bound
\[
\|\tauv_{\mathrm{GJ},n}(Q)\| \leq C_{\qv}\frac{\dt L_0^{1-\alpha}\sin(\pi \alpha)}{\pi(1-\alpha)}\left(1+ \frac{\gamma TL_0}{4Q}\right) \left( \frac{e\gamma TL_0} {8Q}\right)^{2Q},
\]
provided that the optimal value for $\vrho$ is within the accepted interval
$$
\vrho_{opt} = \frac{4Q}{\gamma TL_0} + \sqrt{1+\left(\frac{4Q}{\gamma TL_0}\right)^2} \in (1,\vrho_{\max}),
$$
otherwise we make the choice $\vrho = \vrho_{\max}$.
\end{proof}
\begin{remark}
  In all our numerical experiments, we have found that $\vrho_{\opt} < \vrho_{\max}$.
\end{remark}
\subsection{Gauss quadrature on increasing intervals for the CQ weights}
We next split the remaining integral into the sum
\[
\frac{\dt\sin(\pi \alpha)}{\pi}\int_{L_{0}}^{L} x^{-\alpha} \ev_n(-x\dt) \,dx
= \sum_{j = 1}^J I_{n,j},
\]
where
\[
I_{n,j} =\frac{\dt\sin(\pi \alpha)}{\pi}\int_{L_{j-1}}^{L_{j}} x^{-\alpha} \ev_n(-x\dt) \,dx.
\]
The intervals are again chosen so that for some $B \geq 1$, $L_j = (B+1)L_{j-1}$. To each integral we apply standard Gauss quadrature, i.e., $w(x) \equiv 1$ in Theorem~\ref{th:gwerr}, with $Q$ nodes and denote the corresponding error
by $\tauv_{n,j}(Q)$.

\begin{theorem}\label{th:gerr_wj}
\[
\|\tauv_{n,j}(Q)\| \le \frac{4\dt B L^{1-\alpha}_{j-1}\sin(\pi \alpha)}{\pi}\min_{0<\veps<1} \frac{g(\veps,B)^{-2Q+1}}{g(\veps,B)} \veps^{-\alpha}\min(C_{\qv}, |x_0+cL_{j-1}\dt \veps|^{-n-1}),
\]
with constants $C_{\qv},c,x_0$ from Lemmas~\ref{lem:rbound} and \ref{lem:ebound} and
\begin{equation}\label{fepsB}
g(\veps,B) = 1+\frac 2B (1-\veps) + \sqrt{ \left(1+\frac 2B (1-\veps)\right)^2 -1}.
\end{equation}
\end{theorem}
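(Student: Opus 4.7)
The plan is to mirror the proof of Theorem~\ref{th:gerr} for the continuous kernel $e^{-tx}$, replacing it by $\ev_n(-\dt x)$ and tracking how the two potential sources of singularity of the integrand interact with the choice of Bernstein ellipse. First I would change variables $x = L_{j-1} + \tfrac{\DL_j}{2}(y+1)$ to send $[L_{j-1},L_j]$ to $[-1,1]$, obtaining
\[
I_{n,j} = \frac{\dt \sin(\pi\alpha)\,\DL_j}{2\pi} \int_{-1}^1 \left(L_{j-1} + \tfrac{\DL_j}{2}(y+1)\right)^{-\alpha} \ev_n\!\left(-\dt\left(L_{j-1} + \tfrac{\DL_j}{2}(y+1)\right)\right) dy,
\]
so that standard Gauss quadrature on $[-1,1]$ applies and Theorem~\ref{th:gwerr} (with $w\equiv 1$) reduces the task to bounding the integrand $\fv_{n,j}(y)$ on a Bernstein ellipse $\mathcal{E}_\vrho$.

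The analyticity check inside $\mathcal{E}_\vrho$ involves two sources of singularity: the branch point of the fractional power where $L_{j-1} + \tfrac{\DL_j}{2}(\xi+1)=0$, and the poles of $\ev_n(-\dt(\cdot))$ which, by Lemma~\ref{lem:ebound}, sit in the open right half-plane $\Re z > 0$. Using $\Re\xi \ge -(\vrho+\vrho^{-1})/2$ on $\mathcal{E}_\vrho$, and setting $\veps = 1 - \tfrac{B}{4}(\vrho+\vrho^{-1}-2)$ exactly as in Theorem~\ref{th:gerr}, one obtains $\Re\!\left(L_{j-1} + \tfrac{\DL_j}{2}(\xi+1)\right) \ge L_{j-1}\veps$. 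Hence for $\veps\in(0,1)$ (equivalently $\vrho\in(1,\vrho_{\max})$) the argument of $\ev_n$ has real part at most $-\dt L_{j-1}\veps < 0$, which is strictly to the left of every pole of $\ev_n$, and the fractional power stays off its branch cut. No new restriction on $\vrho$ is thus imposed by the $\ev_n$-factor.

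Next I would estimate $\|\fv_{n,j}\|_\infty$ on $\mathcal{E}_\vrho$. The elementary bound $|w|\ge\Re w$ gives $|(L_{j-1} + \tfrac{\DL_j}{2}(\xi+1))^{-\alpha}| \le (L_{j-1}\veps)^{-\alpha}$. For $\ev_n$ I would use two complementary bounds, both valid on $\Re z \le -\dt L_{j-1}\veps$: property~4 of the stability function combined with Lemma~\ref{lem:rbound} yields $\|\ev_n(z)\| = |r(z)|^n\|\qv(z)\| \le C_\qv$, while Lemma~\ref{lem:ebound} gives $\|\ev_n(z)\| \le |x_0 - c\Re z|^{-n-1} \le (x_0 + c\dt L_{j-1}\veps)^{-n-1}$, since $-c\Re z \ge c\dt L_{j-1}\veps$. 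Taking the minimum of the two furnishes precisely the $\min(C_\qv, |x_0 + c\dt L_{j-1}\veps|^{-n-1})$ factor in the statement.

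Finally I would feed these bounds into Theorem~\ref{th:gwerr} to obtain the prefactor $\tfrac{4\dt B L_{j-1}^{1-\alpha}\sin(\pi\alpha)}{\pi}\,\veps^{-\alpha}\,\min(\cdot)$ multiplied by $\vrho^{-2Q+1}/(\vrho-1)$, and then eliminate $\vrho$ in favour of $\veps$ via the identity $\vrho = g(\veps,B)$ derived in the proof of Theorem~\ref{th:gerr} by inverting $\veps = 1 - \tfrac{B}{4}(\vrho+\vrho^{-1}-2)$ with $\cosh^{-1}$. The main obstacle, which ultimately dissolves, is confirming that the admissible range of $\vrho$ is not shrunk by the stability poles of $\ev_n$; once this is established, the argument is essentially a transcription of the continuous case with the exponential decay $e^{-tL_{j-1}\veps}$ replaced by the uniform-in-$t$ decay estimate on $\ev_n$.
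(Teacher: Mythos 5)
Your argument is correct and is essentially the paper's own proof: the paper simply says the proof is the same as that of Theorem~\ref{th:gerr}, combining $|r(z)|\le 1$ for $\Re z\le 0$ with the bounds of Lemmas~\ref{lem:rbound} and \ref{lem:ebound}, which is exactly the change of variables, ellipse/analyticity check, and $\min(C_{\qv},|x_0+cL_{j-1}\dt\veps|^{-n-1})$ estimate you carry out. Your verification that the poles of $\ev_n$ impose no new restriction on $\vrho$ (since $\Re$ of the argument stays $\le -\dt L_{j-1}\veps<0$) is the one detail the paper leaves implicit, and you supply it correctly.
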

\begin{proof}
The proof is the same as the proof of Theorem~\ref{th:gerr}, we only need to combine the facts that $|r(z)| \leq 1$ for $\Re z \leq 0$, the bound $\|q(z)\| \leq C_{\qv}$ from Lemma~\ref{lem:rbound}, and the bound from Lemma~\ref{lem:ebound}.
\end{proof}

\begin{remark}
 To obtain a uniform bound for $t_n \in [t_{n_0+1}, T]$, we replace $n$ by $n_0+1$ in the above bound.
\end{remark}

\begin{remark}\label{rem:W_comment}
 We have developed the quadrature for the weights $\w_n$. However, up to a small difference in constants, the same error estimates hold for the matrix weights $\Wv_n$. Certainly, due to Lemma~\ref{lem:ebound}, the truncation estimate is the same. The main estimate used in the proof of Theorem~\ref{th:errgj} is the bound on the stability function $r(z)$ and on $q(z)$. The additional terms in $\Ev_n(z)$ would only contribute to the constant. Similar comment holds for Theorem~\ref{th:errgj}.
\end{remark}

% \subsection{Choice of parameters}
% In this section we want to show that it is possible to choose the parametrs so that the number of quadrature points depends only midely, i.e., logarithmically, on the tolerance $\tol$, time-step $\dt$, and the final time $T$. Close to optimal choice of constants will be discussed in the numerical experiments.

% In order to do this we fix and arbitrary $B > 0$, let $L_0 \propto T^{-1}$ with an arbitrary constant of proportionality, and fix an integer $n_0$. The significance of $n_0$ is that the weights $\omega_0,\dots,\omega_{n_0}$ will be computed directly. Following Corollary~\ref{coro:L} we set $L = A/\dt$ with

% Given a tolerance $\tol > 0$, time step $\dt > 0$, minimal index $n_0$, final time $T > 0$, and the fractional power $\alpha in (0,1)$  we use the above estimates to choose the parameters in the quadrature.

\section{Fast summation and computational cost}\label{sec:fastalg}

Now the efficient quadrature is available we explain how to use it to develop a fast algorithm for computing the corresponding discrete convolution.
In order to do this, we split the convolution as
\[
\sum_{j = 0}^n \omegav_j \fv_{n-j} =
\sum_{j = 0}^{n_0} \omegav_j \fv_{n-j}+\sum_{j = n_0+1}^n \omegav_j \fv_{n-j} = I^1_n + I^2_n,
\]
where as before $(\fv_n)_{\ell}=f(t_{n,\ell})$; see \eqref{eq:rkcq_gen}.
The first term is  computed exactly, whereas  for the second we can use the quadrature. Let $N_Q$ be the total number of quadrature nodes and let  $(w_k,x_k)$ denote the quadrature weights and nodes with the weights including  the values of $x_k^{-\alpha}$  in the region $L_0$ to $L$ where Gauss quadrature is used. Then  our approximation of $I^2_n$ has the form
\[
\sum_{j = n_0+1}^n \omegav_j \fv_{n-j}
\approx \sum_{k = 1}^{N_Q} w_k  (r(-\dt x_k))^{n_0+1} \sum_{j = 0}^{n-n_0-1}(r(-\dt x_k))^j \qv(-\dt x_k) \fv_{n-n_0-1-j}.
\]
Defining
\begin{equation}\label{Qs}
Q_{n,k} = \sum_{j = 0}^{n-n_0-1}(r(-\dt x_k))^j \qv(-\dt x_k) \fv_{n-n_0-1-j}
\end{equation}
we see that
\[
Q_{n,k} = r(-\dt x_k)Q_{n-1,k}+\qv(-\dt x_k)\fv_{n-n_0-1}, \qquad Q_{n_0,k} = 0.
\]
Hence the convolution can be approximated as
\[
\sum_{j = 0}^n \omegav_j \fv_{n-j} \approx
\sum_{j = 0}^{n_0} \omegav_j \fv_{n-j}
+\sum_{k = 1}^{N_Q} w_k  (r(-\dt x_k))^{n_0+1} Q_{n,k},
\]
with the $Q_{n,k}$ satisfying the above recursion. Notice that for each $k=1, \dots, N_Q$, $Q_{n,k}$ is the RK approximation at time $t_{n-n_0-1}$ of the ODE:
$$
\dot{q} = -x_k q + f, \qquad q(0)=0.
$$

Thus, from one step to the next one we only need updating $Q_{n,k}$, for $k=1,\dots,N_Q$,  $N_Q$ being the total number of quadrature nodes. Set $\veps$ the target accuracy of the quadrature. Then, from the results in Section~\ref{sec:quadcqw} it follows that the total computational cost is $O(N N_Q)$ with
\begin{equation}\label{totalnq}
N_Q=O(|\log(\veps)| \log(L/L_0)).
\end{equation}

For $n\ge 5$, Corollary~\ref{coro:L} implies $L\sim \dt^{-1}$ and from Theorem~\ref{th:errgj} a reasonable choice for $L_0$ is $L_0=4/(eT)$, which leads to
$$
N_Q = O(|\log(\veps)| \log(\dt^{-1}T)) = O(|\log(\veps)| \log(NT)).
$$

Therefore, the computational complexity is $O(|\log \veps|N \log N )$, whereas the storage requirement scales as $O(N_Q) = O(|\log \veps|\log N )$.
%Assuming now that the {\em exact} CQ converges with order $p$ for the problem under study and applying \cite[Theorem 5.3]{HipLoPa} it follows that the choice $\veps^{-1}=N^{p+3}$ preserves the accuracy of the {\em exact} CQ for all values of $\alpha\in(0,1)$. \red {I think this estimate can be easily improved to $O(N^{p+3-\alpha})$ but I have to check carefully and I don't know if we want to do this... Have you also done this before? Maybe you got a better estimate than the one in \cite{HipLoPa}!}
%
%\red{ The above estimates imply that the total computational cost is $O(N\log^2(N))$, and thus similar to the algorithm in \cite{SchaLoLu}. The memory requirements are also of the same order as in \cite{SchaLoLu}, namely $O(M)=(\log^2 (N))$. The present algorithm is though much simpler than the one in \cite{SchaLoLu} due to the fact that the same quadrature rule is used for all times a no sophisticated algorithm is required in order to keep the active memory and the complexity below the stated orders. Also, our noew algorithm allows in a rather natural an easy way to choose variable time steps and it is also easy and cheap to increase the final time $T$ {\em a posteriori}, since the value of $T$ only affects the subintegral between 0 and $L_0$.} {\em This might be better in the Introduction}

\section{Numerical experiments}
Given a tolerance $\tol > 0$, time step $\dt > 0$, minimal index $n_0$, final time $T > 0$, and the fractional power $\alpha \in (0,1)$  we use the above estimates to choose the parameters in the quadrature.

In particular we choose  $L_0 = 4/T$ and $L = A \dt^{-1}$ with $A$ such that the upper bound for the trunction error in \eqref{eq:remainder_intw} is less than $\tol/3$. We set $\tilde B = 3$ and
\[
J = \left\lfloor \frac{\log(L/L_0)}{\log(1+\tilde B)}\right\rfloor \text{ and } B = (L/L_0)^{1/J}-1.
\]
Note that in general this choice results in fewer integration intervals than when fixing $B$ and setting $J$ to the smallest integer such that $L \leq L_0(1+B)^J$.
Next, we set $L_j = L_0(1+B)^j$, for $j = 0,\dots,J$ and let $Q_0$ denote the number of quadrature points in the Gauss-Jacobi quadrature on $[0, L_0]$ and $Q_j$, $j = 0,\dots, J-1$, the number of Gauss quadrature points in the interval $[L_{j},L_{j+1}]$. We choose the smallest $Q_0$ so that the bound on $\|\tauv_{\mathrm{GJ},n}(Q_0)\|$ in Theorem~\ref{th:errgj} is less than $\tol/3$; note that in all of the experiments below we had $\vrho_{\text{opt}} < \vrho_{\text{max}}$. By doing a simple numerical minimization on the bound in Theorem~\ref{th:gerr_wj}, we find the optimal $Q_j$ such that the error $\|\tauv_{n,j}(Q_j)\| < \tol\, J^{-1}/3$. With this choice of parameters each weight $\omegav_j$, $j > n_0$, is computed to accuracy less than $\tol$.

In Figure~\ref{fig:weights_err} we show the error $\|\widetilde \omegav_n -\omegav_n\|$, where $\widetilde \omegav_n$ is the $n$th weight computed using the new quadrature scheme and $\omegav_n$ is an accurate approximation of the weight computed by standard means. We see that the error is bounded by the tolerance and that for the initial weights the error is close to this bound. The error for larger $n$ is considerably smaller than the required tolerance. This is expected, as in Corollary~\ref{coro:L} we need to use the worst case $n = n_0+1$ to determine the trunction parameter $A$.

We also investigate the number of quadrature points in dependence on $\dt$, $T$ in Table~\ref{tab:dtT}  and on $\alpha$ and $\tol$ in Table~\ref{tab:tol}. We observe only a moderate increase with decreasing $\dt$, $\tol$ and increasing $T$. The dependence on $\alpha$ is mild.

\begin{table}
  \centering
  \begin{tabular}{c|ccccc}
$\dt\big\backslash T$ & $1$ & $10$ & $100$ & $1000$ \\\hline
 $10^{-1}$ &$20$ & $30$ & $40$ & $49$ \\
$ 10^{-2}$ &$27$ & $36$ & $44$ & $52$ \\
$ 10^{-3}$ &$31$ & $39$ & $46$ & $50$ \\
$ 10^{-4}$ &$34$ & $40$ & $45$ & $48$
  \end{tabular}\hspace{1cm}
    \begin{tabular}{c|ccccc}
$\dt\big\backslash T$ & $1$ & $10$ & $100$ & $1000$ \\\hline
 $10^{-1}$ & $13$ & $24$ & $34$ & $44$ \\
$ 10^{-2}$ & $21$ & $31$ & $39$ & $46$ \\
$ 10^{-3}$ & $28$ & $35$ & $41$ & $46$ \\
$ 10^{-4}$ & $31$ & $37$ & $43$ & $45$
  \end{tabular}
  \caption{\small Dependence of the total number of quadrature points on time step $\dt$ and final time $T$. The other parameters are fixed at $n_0 = 5$, $B = 3$, $\tol = 10^{-6}$, $\alpha = 0.5$. On the left the data is for backward Euler and on the right for the 2-stage Radau IIA CQ.}
    \label{tab:dtT}
\end{table}

\begin{table}
  \centering
  \begin{tabular}{c|cccccccccc}
    $\tol\big\backslash\alpha$ & $0.1$ &  $0.3$ &  $0.5$  & $0.7$  & $0.9$ \\ \hline
%$10^{-1}$ &$5$ & $4$ & $5$ & $5$ & $5$ \\
$10^{-2}$ & $11$ & $11$ & $10$ & $8$ & $6$ \\
%$10^{-3}$ &$18$ & $18$ & $18$ & $17$ & $12$ \\
$10^{-4}$ & $27$ & $27$ & $26$ & $25$ & $21$ \\
%$10^{-5}$ &$37$ & $36$ & $35$ & $33$ & $30$ \\
$10^{-6}$ & $45$ & $44$ & $45$ & $43$ & $36$ \\
%$10^{-7}$ &$56$ & $55$ & $52$ & $51$ & $46$ \\
$10^{-8}$ & $66$ & $65$ & $64$ & $61$ & $55$ \\
%$10^{-9}$ &$76$ & $76$ & $73$ & $70$ & $66$ \\
$10^{-10}$ &$86$ & $87$ & $85$ & $82$ & $74$
  \end{tabular}\hspace{.5cm}
  \begin{tabular}{c|cccccccccc}
$\tol\big\backslash\alpha$ & $0.1$ &  $0.3$ &  $0.5$  & $0.7$  & $0.9$ \\ \hline
%$10^{-1}$ & $5$ & $4$ & $4$ & $5$ & $5$ \\
$10^{-2}$ & $9$ & $9$ & $8$ & $8$ & $6$ \\
%$10^{-3}$ & $17$ & $17$ & $16$ & $15$ & $11$ \\
$10^{-4}$ & $23$ & $25$ & $24$ & $23$ & $20$ \\
%$10^{-5}$ & $31$ & $30$ & $31$ & $30$ & $29$ \\
$10^{-6}$ & $39$ & $39$ & $39$ & $37$ & $35$ \\
%$10^{-7}$ & $58$ & $56$ & $46$ & $45$ & $42$ \\
$10^{-8}$ & $71$ & $68$ & $65$ & $53$ & $51$ \\
%$10^{-9}$ & $82$ & $81$ & $76$ & $73$ & $57$ \\
$10^{-10}$ &$96$ & $93$ & $90$ & $86$ & $77$
  \end{tabular}
  \caption{\small Dependence of the total number of quadrature points on the tolerance $\tol$ and the fractional power $\alpha$. The other parameters are fixed at $\dt = 10^{-2}$, $T = 50$, $n_0 = 5$, $B = 3$. Again the data on the left is for backward Euler and on the right for 2-stage Radau IIA.}
  \label{tab:tol}
\end{table}

\begin{figure}
  \centering
  \includegraphics[width=0.7\textwidth]{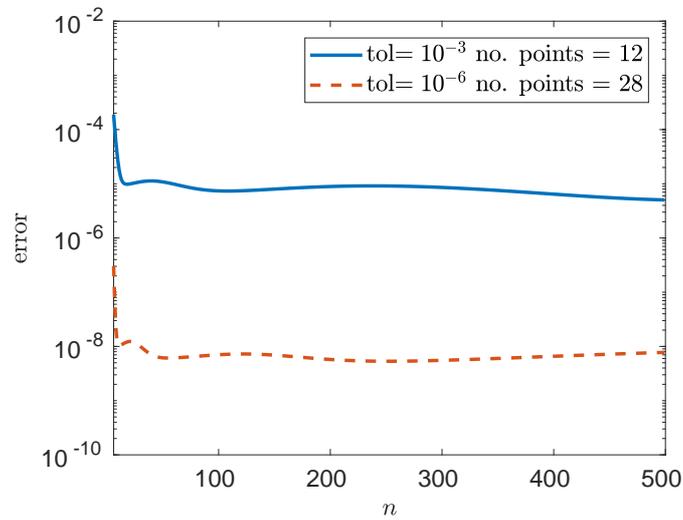}
  \caption{\small We show the error in the computation of the 2-stage Radau IIA weights $\omega_n$ for $n > n_0$ with two different tolerances. The number of quadrature points is also shown. The results are for $\alpha = 0.5$, $T = 5$, $\dt = 10^{-2}$, $n_0 = 5$, and $B = 3$. }
  \label{fig:weights_err}
\end{figure}

\subsection{Fractional integral}

Let us now consider the evaluation of a fractional integral
\begin{equation}
  \label{eq:frac_int_num}
  u(t) = \mathcal{I}^{\alpha}[g](t),
\end{equation}
where
\[
  g(t) = t^3 e^{-t}.
\]
  First we investigate the behaviour of the standard implementation of CQ, based on FFT.  In the particular case of the two-stage Radau IIA, $p = 3$ and $q = 2$, from Theorem~\ref{th:LuOs} we would expect full order convergence.  We set  $T = 128$,  and $\dt = 2^{3-j}$, $j = 0, \dots, 7$, $\alpha = 1/4$. We do not have access to the exact solution $u(t)$, so its role is taken by an accurate numerical approximation. In Figure~\ref{fig:convcq} we show the convergence of the error $\max_n |u(t_n)-u_n|$ using the standard implementation of CQ. We compare it with  the  theoretical reference curve $ 10^{-2.5}(\dt^3+ \left|\log(\dt)\right| \dt^{3+\alpha})$, which fits the results better in this pre-asymptotic regime than the dominant term  $\dt^3$ on its own.

  \begin{figure}
    \centering
    \includegraphics[width=.7\textwidth]{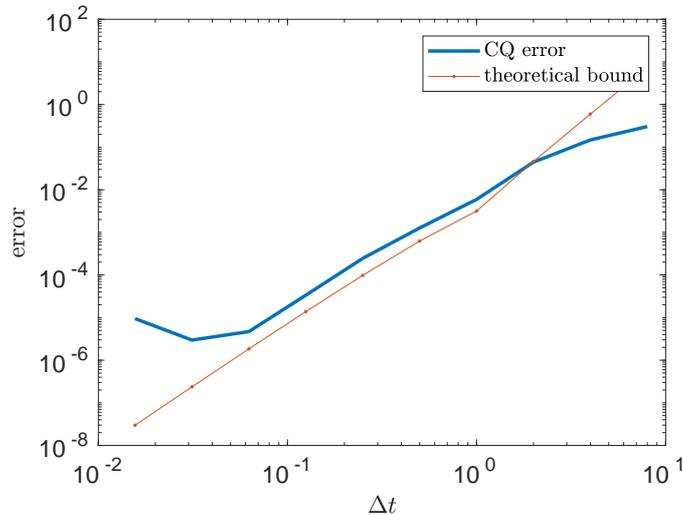}
    \caption{\small Convergence of the error $\max_n |u(t_n)-u_n|$ for the 2-stage Radau IIA convolution quadrature of the fractional integral \eqref{eq:frac_int_num}.}
    \label{fig:convcq}
  \end{figure}

    \begin{figure}
    \centering
    \includegraphics[width=.49\textwidth]{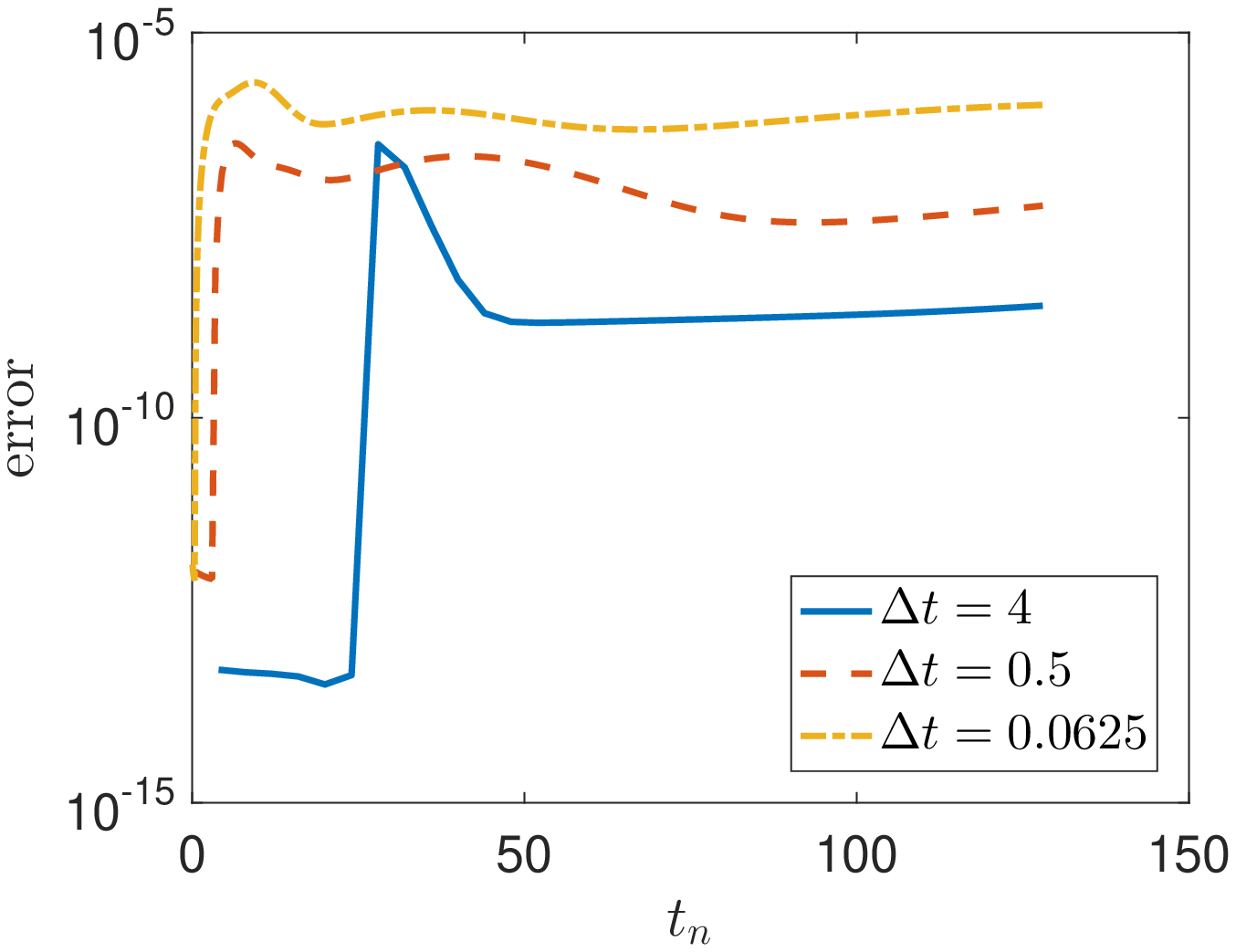}
        \includegraphics[width=.49\textwidth]{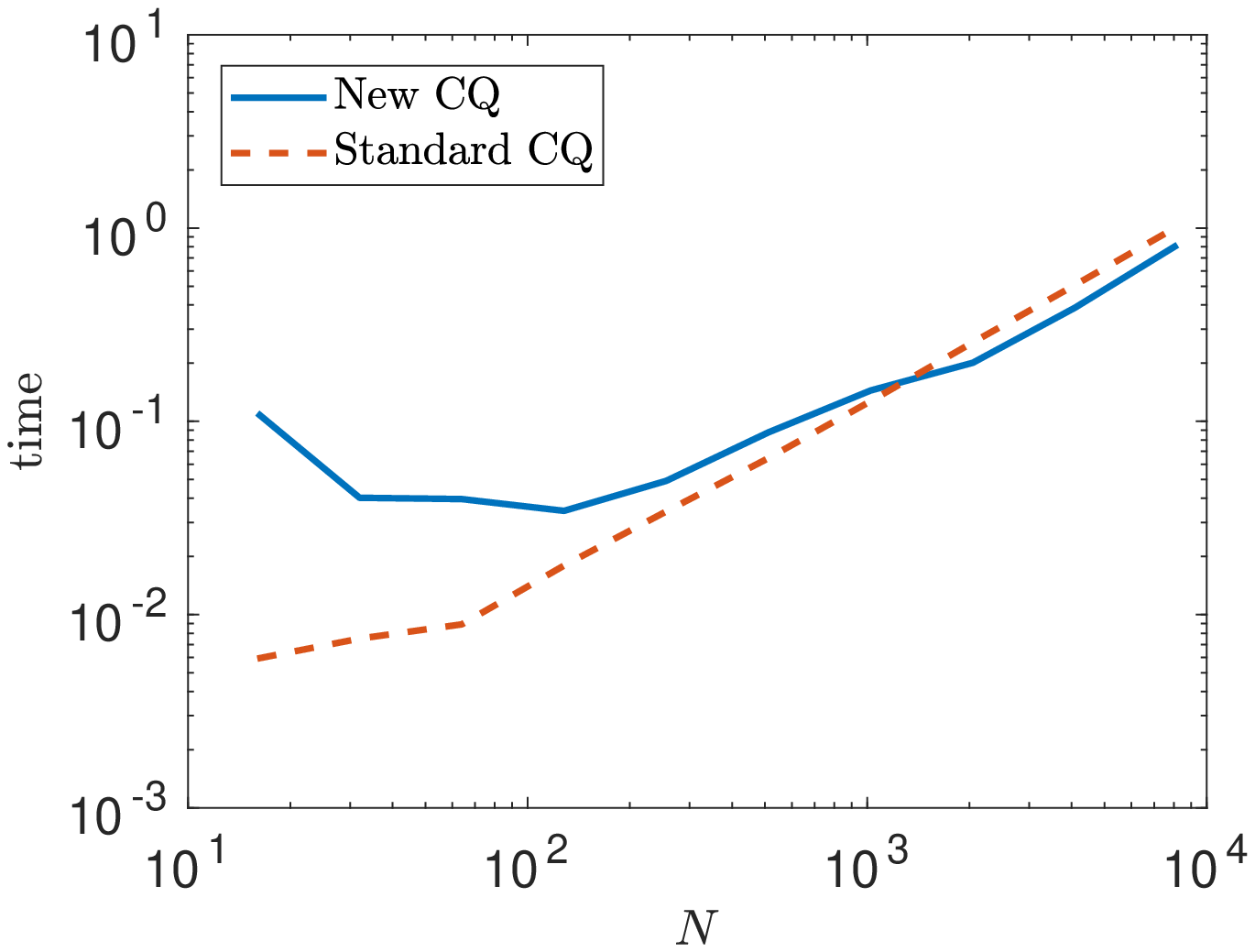}
    \caption{\small We plot in the left graph the difference $|\tilde u_n-u_n|$ against $t_n$, where $u_n $is computed using the standard implementation of CQ and $\tilde u_n$ with the new method at $\tol = 10^{-6}$. On the right we plot the required time for the two methods.}
    \label{fig:cqdiff}
  \end{figure}

  Next, we apply our new quadrature implementation of CQ. We set  $\tol = 10^{-6}$, and the rest of the parameters as in the above Section. We denote by $\tilde u_n$ the new approximation of $u_n$ and plot the error $|u_n - \tilde u_n|$ in Figure~\ref{fig:cqdiff}. We see that the error is bounded by $10^{-6}$ for all $n$, showing that the final perturbation error introduced by our approximation of the CQ weights remains  bounded with respect to the target accuracy in our quadrature, cf.~\cite{SchaLoLu}. We also compare computational times in Figure~\ref{fig:cqdiff}. For the implementation of the standard CQ we have used the $O(N\log N)$ FFT based algorithm from \cite{Lu_88II}. We see that for larger time-steps the FFT method is faster due to a certain overhead in constructing the quadrature points for the new method. For smaller time steps however the new method is even marginally faster. The main advantage of the new method is the $O(\log N)$ amount of memory required compared to $O(N)$ amount of memory by the standard method. For example, in this computation with the smallest time step, there are $N = 2048$ time steps and the total number of quadrature points is $37$. As each quadrature point carries approximately the same amount of memory as one directly computed time-step, we see that the memory requirement is around 50 times smaller with the new method for this example. Such a difference in memory requirements becomes of crucial importance when faced with non-scalar examples coming from discretizations of PDE. The next section considers this case.

\section{Application to a fractional diffusion equation}\label{sec:FDE}
%\red{ Is this the same problem as in \cite{CuLuPa}?  There it is
%$$\partial_t u= \mathcal{I}^{\beta} \Delta u + f,$$ which is equivalent to
%$$\partial^{1+\beta} u= \Delta u + \partial^{\beta} f,$$
%with $1+\beta \in (1,2)$.
%We actually have below
%$$u= \mathcal{I}^{\beta}[ \Delta u ] + \mathcal{I}^{1}f,$$
%don't we? This is the problem Nochetto et al. consider, I think, and actually it is also the one in \cite[Section 8]{CuLuPa}. How do you solve it? Below in \eqref{eq:heat_sd} there is still a $\partial_t U$. Is there some reason to avoid the integration of \eqref{eq:heat} before solving it numerically? }

We now consider the problem of finding $u(t) \in H_0^1(\Omega)$ such that
\begin{equation}  \label{eq:heat}
\begin{aligned}
    \partial_t^\beta u - \Delta u &= f, & &\text{for } (x,t) \in \Omega \times [0,T],\\
    u^{(k)}(x,0) &= 0, & &\text{for } x\in \Omega, \; k = 0,\dots, m-1,
\end{aligned}
\end{equation}
with $\beta \in (0,2) \setminus \{1\}$ and $m = \lceil \beta \rceil$. Here, $\Omega$ is a bounded, convex  Lipschitz domain in $\bR^d$, $d = 1,2,3$,   $H^1_0(\Omega)$ the Sobolev space of functions with zero trace,  and $\partial_t^\beta$ the fractional derivative
\begin{equation} \label{eq:frac_der}
\partial_t^\beta  u:= \mathcal{I}^{m-\beta}[\partial^m_t u](t)
= \frac{1}{\Gamma(1-m+\beta)}\int_0^t (t-s)^{m-\beta-1} \partial^{m}_t u(s)\,ds.
\end{equation}
This is the fractional derivative in the Caputo sense, which in the case $u^{(k)}(0) = 0$, $k = 0,\dots, m-1$,  is equivalent to the Riemann-Liouville derivative.

\begin{remark}
For simplicity  we avoid the integer case $\beta = 1$ as it is just the standard heat equation and in some places this case would have to be treated slighlty differently.
\end{remark}

The application of CQ based on BDF2 to integrate \eqref{eq:frac_der}  in time has been analyzed in \cite[Section 8]{CuLuPa}. A related problem with a fractional power of the Laplacian has been studied in \cite{NoOSa}, but not with a CQ time discretization. Here we apply Runge--Kutta based CQ. The analysis of the application of RK based CQ to \eqref{eq:heat} is not available in the literature, hence we give the analysis here for sufficiently smooth and compatible right-hand side $f$. We first analyze the error of the spatial discretization.

\subsection{Space-time discretization of the FPDE: error estimates}

Let $X_{\Delta x} \subset H^1_0(\Omega)$ be a finite element space of piecewise linear functions and let $\Delta x$ be the meshwidth. Applying the Galerkin method in space we obtain a system of fractional differential equations: Find $\uv(t) \in X_{\Delta x}$ such that
\begin{equation}  \label{eq:heat_sd0}
\begin{aligned}
    \int_\Omega \partial_t^\beta \uv(t) \vv  + \nabla \uv(t)\; \nabla \vv \; dx &= \int_\Omega f(t) \vv dx, & &\text{for } t \in [0,T], \vv \in X_{\Delta x}\\
    \uv^{(k)}(0) &= \mathbf{0}, & &\text{for } x\in \Omega, \; k = 0,\dots, m-1,
  \end{aligned}
\end{equation}
\begin{theorem}\label{th:semi_disc}
  Let $f \in C^m([0,T]; L^2(\Omega))$ with $f^{(k)}(0) = 0$, $k = 0,\dots,m-1$ and  let $\uv(t)$ be the solution of \eqref{eq:heat_sd0} and  $u(t)$ the solution of \eqref{eq:heat}. Then if $m > \beta$ we have
  \[
        \|u(t) -\uv(t)\|_{H^1(\Omega)} \leq C \Delta x \int_0^t \|f^{(m)}(\tau)\|_{L^2(\Omega)}d\tau.
      \]
      If further $m > 2\beta$ we have
        \[
        \|u(t) -\uv(t)\|_{L^2(\Omega)} \leq C (\Delta x)^2 \int_0^t \|f^{(m)}(\tau)\|_{L^2(\Omega)}d\tau.
      \]
    \end{theorem}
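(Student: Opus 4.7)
The strategy follows the operational calculus / Laplace transform framework that is central to the paper. Both solutions can be expressed as convolutions with operator-valued kernels: $u(t) = K(\partial_t)f(t)$ and $\uv(t) = K_h(\partial_t) f(t)$, where $K(s) = (s^\beta - \Delta)^{-1}$ is the solution operator of the shifted elliptic problem with homogeneous Dirichlet data, and $K_h(s) = (s^\beta - \Delta_h)^{-1} P_h$ is its Galerkin analogue, with $\Delta_h$ the discrete Laplacian on $X_{\Delta x}$ and $P_h$ the $L^2$-projection. These identities follow from taking Laplace transforms of \eqref{eq:heat} and \eqref{eq:heat_sd0}, using the vanishing initial conditions to dispose of the polynomial terms arising from $\partial_t^\beta$.

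The compatibility $f^{(k)}(0) = 0$ for $k < m$ lets me integrate by parts in the transform, giving $\hat f(s) = s^{-m}\widehat{f^{(m)}}(s)$. By the composition rule for $K(\partial_t^h)$ / $K(\partial_t)$ stated in Section~\ref{subsec:rkcq}, the error is a convolution
\[
u(t) - \uv(t) = \int_0^t E(t-\tau)\,f^{(m)}(\tau)\,d\tau,
\]
where $E$ is the inverse Laplace transform of $s^{-m}\bigl(K(s) - K_h(s)\bigr)$. The theorem reduces to showing the uniform bounds $\|E(\sigma)\|_{L^2 \to H^1} \leq C\Delta x$ (assuming $m > \beta$) and $\|E(\sigma)\|_{L^2 \to L^2} \leq C(\Delta x)^2$ (assuming $m > 2\beta$), from which the stated convolution estimate follows by Young's inequality.

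The crucial technical ingredient is the resolvent error estimate for the shifted elliptic problem $s^\beta w - \Delta w = g$. Since $\Omega$ is convex Lipschitz, $\|w\|_{H^2(\Omega)} \leq C\|g\|_{L^2}$ uniformly for $s$ in a sector $\Sigma$ about $\mathbb{R}^+$ where $\mathrm{Re}\, s^\beta \geq 0$. Working in the $s$-dependent energy norm $\|v\|_s^2 := |s|^\beta\|v\|^2 + \|\nabla v\|^2$, in which the sesquilinear form $a_s(u,v) = s^\beta\langle u,v\rangle + \langle \nabla u, \nabla v\rangle$ is uniformly bounded and coercive, Cea's lemma combined with standard $P_1$ interpolation and an Aubin--Nitsche duality argument give
\[
\|(K-K_h)(s)\|_{L^2 \to H^1} \leq C\Delta x\bigl(1 + |s|^{\beta/2}\Delta x\bigr), \qquad \|(K-K_h)(s)\|_{L^2 \to L^2} \leq C(\Delta x)^2\bigl(1 + |s|^{\beta}(\Delta x)^2\bigr).
\]
These can be further improved at large $|s|$ by the trivial estimate $\|(K-K_h)(s)\|_{L^2 \to L^2} \leq 2\|K(s)\|_{L^2 \to L^2} \leq C|s|^{-\beta}$.

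Finally, I would insert the above into the Bromwich integral along an optimized Hankel contour $\Gamma \subset \Sigma$, bending into the left half-plane to exploit the decay of $|e^{s\sigma}|$. The factor $s^{-m}$ must absorb the spurious growth factors $|s|^{\beta/2}$ (for $H^1$) or $|s|^\beta$ (for $L^2$) arising from the resolvent error, and this is precisely what gives rise to the thresholds $m > \beta$ and $m > 2\beta$ respectively. I expect the main obstacle to lie exactly here: one must carefully balance the three regimes of the resolvent estimate (the $(\Delta x)^2$ plateau, the growing $|s|^\beta$ tail, and the decaying $|s|^{-\beta}$ tail), switching radii in the contour near $|s|\sim(\Delta x)^{-2/\beta}$, so that the resulting kernel $E(\sigma)$ is controlled uniformly on $(0,T]$ without accumulating undesired powers of $\sigma$.
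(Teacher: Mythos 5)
Your frequency--domain core coincides with the paper's proof: both pass to the Laplace transformed problem \eqref{eq:heat_s}, establish the resolvent bound \eqref{eq:res_est}, and estimate the Galerkin error of the shifted elliptic problem via C\'ea's lemma, Aubin--Nitsche duality and $H^2$ regularity on the convex domain. Where you genuinely differ is at the two ends. First, by working in the $s$-dependent energy norm you get the sharper resolvent-error bounds $C\Delta x\,(1+|s|^{\beta/2}\Delta x)$ and $C(\Delta x)^2(1+|s|^{\beta/2}\Delta x)^2$, whereas the paper settles for the cruder $C\Delta x\max(1,|s|^{\beta})$ and $C(\Delta x)^2\max(1,|s|^{2\beta})$. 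Second, you return to the time domain through the explicit error kernel $E=\mathcal{L}^{-1}\bigl[s^{-m}(K-K_h)\bigr]$, a Hankel contour and Young's inequality, while the paper rewrites its bound with the weight $\|z^m\hat f\|$ and appeals tersely to Parseval. Your route costs the contour bookkeeping you flag, but it is viable and it is the one that really delivers the stated pointwise-in-$t$ bound with the factor $\int_0^t\|f^{(m)}\|_{L^2(\Omega)}d\tau$: combining your FEM bounds with the trivial estimates $\|(K-K_h)(s)\|_{L^2\to L^2}\leq C|s|^{-\beta}$ and $\|(K-K_h)(s)\|_{L^2\to H^1}\leq C|s|^{-\beta/2}$ (the latter from coercivity plus \eqref{eq:res_est}, valid also for the Galerkin solution) gives $\|(K-K_h)(s)\|\leq C(\Delta x)^2$, resp.\ $C\Delta x$, uniformly over the sector, and then scaling the contour of radius $\sim 1/\sigma$ yields $\|E(\sigma)\|\leq C\,\sigma^{m-1}\cdot(\Delta x)^2$, resp.\ $C\,\sigma^{m-1}\cdot\Delta x$, hence a $T$-dependent uniform kernel bound for $m\geq 1$.

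One caveat: your accounting of where the hypotheses enter is off. In the paper, $m>\beta$ and $m>2\beta$ are used precisely so that the multipliers $\max(|z|^{-m},|z|^{\beta-m})$ and $\max(|z|^{-m},|z|^{2\beta-m})$ stay bounded at infinity, i.e.\ they compensate the crude growth $|z|^{\beta}$ and $|z|^{2\beta}$. With your sharper bounds the residual growth is only $|s|^{\beta/2}$ and $|s|^{\beta}$, so ``absorption by $s^{-m}$'' would give thresholds $m>\beta/2$ and $m>\beta$, not the stated ones; and once the trivial large-$|s|$ estimates are invoked as above, no threshold beyond $m\geq1$ is needed at all. So the argument you outline, carried through carefully, proves the theorem (in fact a slightly stronger statement), but you should not present the conditions $m>\beta$, $m>2\beta$ as arising ``precisely'' from your contour estimate.
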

    \begin{proof}
Consider the Laplace transform of \eqref{eq:heat}
      \begin{equation}
    \label{eq:heat_s}
    z^\beta \hat u -\Delta \hat u = \hat f, \qquad |\arg(z)| <  \min(\pi, (\pi-\delta)/\beta),
  \end{equation}
 for some fixed $\delta > 0$,   and
the bilinear form
  \[
    a(u,v) =  \int_\Omega z^\beta u \overline v dx +  \int_\Omega \nabla u \cdot \nabla \overline v dx.
  \]
Hence,  $a(u,v) = \int_\Omega \hat f \overline v dx$ is  the weak form of \eqref{eq:heat_s}. The bilinear form   is continuous
  \[
    |a(u,v)| \leq \max(1,|z|^{\beta}) \|u\|_{H^1(\Omega)}  \|v\|_{H^1(\Omega)}
  \]
  and
  \[
    \Re a(z^{-\beta}u,u) =  \|u\|_{L^2(\Omega)}^2 +  \Re z^{-\beta}\|\nabla u\|_{L^2(\Omega)}^2\geq \Re z^{-\beta}\|\nabla u\|_{L^2(\Omega)}^2
  \]
  and
  \[
    |\Im a(z^{-\beta} u,u)| = |\Im z^{-\beta}| \|\nabla u\|_{L^2(\Omega)}^2.
  \]
  Hence
  \[
    |a(u,u)|  = |z|^\beta |a(z^{-\beta}u,u)|\geq \left\{
      \begin{array}{cc}
        \|\nabla u\|_{L^2(\Omega)}^2& \text{ if }\Re z^{-\beta} > 0, \\
        |z|^\beta |\Im z^{-\beta}|\|\nabla u\|_{L^2(\Omega)}^2 & \text{ otherwise}.

      \end{array}
    \right.
    \]
As $|\arg(z^\beta )| < \pi-\delta$, we have that  $|a(u,u)| \geq C\|\nabla u\|_{L^2(\Omega)}^2$ and using the Poincar\'e inequality we obtain coercivity in $H^1_0(\Omega)$.  Lax-Milgram gives us that there exists a unique $\hat u \in H^1_0(\Omega)$ solution of \eqref{eq:heat_s} and that
  \[
    \|\hat u\|_{H^1(\Omega)} \leq C_\Omega \|\hat f\|_{H^{-1}(\Omega)}.
  \]
  If furthermore $\hat f \in L^2(\Omega)$, we have that
    \[
    |\Im z^\beta|   \|\hat u\|_{L^2(\Omega)}^2 =  \left|\Im \int_\Omega \hat f\overline{\hat u} dx\right| \leq \|\hat f\|_{L^2(\Omega)}\|\hat u\|_{L^2(\Omega)}
  \]
and
\[
    \Re z^\beta   \|\hat u\|_{L^2(\Omega)}^2 =  -\|\nabla u\|_{L^2(\Omega)}^2+\Re \int_\Omega \hat f\overline{\hat u} dx \leq  \|\hat f\|_{L^2(\Omega)}\|\hat u\|_{L^2(\Omega)}.
\]
  Dividing by $|z|^\beta\|\hat u\|_{L^2(\Omega)}$  and using the fact that $|\arg(z^\beta )| < \pi-\delta$ gives
  \begin{equation}
    \label{eq:res_est}
    \|\hat u\|_{L^2(\Omega)} \leq C |z|^{-\beta} \|\hat f\|_{L^2(\Omega)}.
  \end{equation}

For the finite element solution, C\'ea's lemma gives that
    \[
      \|\hat u - \hat \uv\|_{H^1(\Omega)} \leq \max(1,|z|^{\beta})\inf_{\vv \in X_{\Delta x}}\|\hat u - \vv\|_{H^1(\Omega)},
    \]
    where $\hat \uv$ denotes the Laplace transform of $\uv$.    Using the  Aubin-Nitche trick, we obtain the estimate in the weaker norm
    \[
      \|\hat u - \hat \uv\|_{L^2(\Omega)} \leq C \max(1,|z|^{\beta})\|\hat u - \hat \uv\|_{H^1(\Omega)} \sup_{g \in L^2(\Omega),\; g \neq 0}  \inf_{\vv \in X_{\Delta x}}\frac{ \|\varphi_g-\vv\|_{H^1(\Omega)}}{\|g\|_{L^2(\Omega)}},
      \]
      where $\varphi_g$ is the solution of the dual problem
      \[
        a(\vv,\varphi_g) = \int_\Omega g \overline \vv \; dx, \qquad \text{ for all } \vv \in X_{\Delta x}.
      \]
      Recalling that  $\Omega$ is assumed to be convex, we can use  standard elliptic regularity results together with $-\Delta \hat u = \hat f-z^{\beta}\hat u$ to show that
      \[
        \|\hat u \|_{H^2(\Omega)} \leq C \|\hat f-z^{\beta}\hat u\|_{L^2(\Omega)}
        \leq C(\|\hat f\|_{L^2(\Omega)}+ \|z^\beta \hat u\|_{L^2(\Omega)})
        \leq C \|\hat f\|_{L^2(\Omega)},
      \]
where the final inequality follows from \eqref{eq:res_est}.
      Similarly
      \[
        \| \varphi_g \|_{H^2(\Omega)}  \leq C \|g \|_{L^2(\Omega)}
      \]
      and using standard approximation results we have that
      \[
        \|\hat u - \hat \uv\|_{H^1(\Omega)} \leq C \Delta x\max(1,|z|^{\beta})  \|\hat f\|_{L^2(\Omega)} =
        C \Delta x\max(|z|^{-m},|z|^{\beta-m})  \|z^m\hat f\|_{L^2(\Omega)}
        \]
        and
      \[
        \|\hat u - \hat \uv\|_{L^2(\Omega)} \leq C \max(|z|^{-m},|z|^{2\beta-m}) (\Delta x)^2 \|z^m\hat f\|_{L^2(\Omega)}.
      \]
      The proof is completed by applying Parseval's theorem.
    \end{proof}

    The fully discrete system is now obtained by simply discretizing the fractional derivative at stage level using RK-CQ:
    \begin{equation}  \label{eq:heat_fd0}
  \int_\Omega (\ptv_t^h)^\beta \Uv(\tv_n) \vv  + \nabla \Uv_n\; \nabla \vv \; dx = \int_\Omega f(\tv_n) \vv dx,
\end{equation}
for $n = 1,\dots, N-1, \vv \in X_h$.

\begin{theorem}\label{th:fully_disc}
  Let an $A$-stable, $s$-stage Runge-Kutta method of order $p$ and stage order $q$ be given which satisifes the assumptions of Section~\ref{subsec:rkcq} and let $u(t)$ be the solution of \eqref{eq:heat} and $\Uv$ solution of \eqref{eq:heat_fd0}. If $\uv^h$ denotes the solution at full time steps, i.e., $\uv^h_{n+1} = \Uv_{n,s}$ and if $f \in C^p([0,T]; L^2(\Omega))$ with  $f^{(k)}(0) = 0$, for $k = 0,\dots, \lceil \beta \rceil-1$ then
  \[
    \begin{split}
      \|u(t_n)-\uv^h_n\|_{L^2(\Omega)} = O(\Delta x^2)&+O(h^p+h^{q+1+\beta})\\
      &+O(h^p) \left( \sum_{\ell=0}^q \left(1+t_n^{\beta+\ell-p} \right) \|f^{(\ell)}(0)\|_{L^2(\Omega)} \right).
          \end{split}
    \]
\end{theorem}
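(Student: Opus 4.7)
The plan is to decompose the error through the spatial semi-discrete solution $\uv(t)$:
\begin{equation*}
\|u(t_n)-\uv^h_n\|_{L^2(\Omega)} \le \|u(t_n)-\uv(t_n)\|_{L^2(\Omega)} + \|\uv(t_n)-\uv^h_n\|_{L^2(\Omega)},
\end{equation*}
and handle the two contributions independently: the spatial term by (a mild sharpening of) Theorem~\ref{th:semi_disc}, and the temporal term by Theorem~\ref{th:LuOs} applied in an operator-valued setting.

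For the spatial part I would revisit the Laplace-domain proof of Theorem~\ref{th:semi_disc} with $p$ in place of $m$: since $f\in C^p([0,T];L^2(\Omega))$ with $f^{(k)}(0)=0$ for $k<m$, the same resolvent bound on $z^\beta-\Delta$ together with Parseval (under the mild condition $p>2\beta$, satisfied by all RK methods of interest) yields
\begin{equation*}
\|u(t)-\uv(t)\|_{L^2(\Omega)} \le C(\Delta x)^2 \int_0^t \|f^{(p)}(\tau)\|_{L^2(\Omega)}\,d\tau,
\end{equation*}
which delivers the $O(\Delta x^2)$ contribution.

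For the temporal part I would cast the semi-discrete system as an abstract convolution. Letting $A_{\Delta x}\colon X_{\Delta x}\to X_{\Delta x}$ be the Galerkin realization of $-\Delta$ and $P_{\Delta x}$ the $L^2$-projection onto $X_{\Delta x}$, the identities $\uv = K(\partial_t)P_{\Delta x}f$ and $\uv^h = K(\partial_t^h)P_{\Delta x}f$ hold with transfer operator $K(z)=(z^\beta I + A_{\Delta x})^{-1}$, the stage-to-full-step identification $\uv^h_{n+1}=\Uv_{n,s}$ being supplied by $c_s=1$. Since $A_{\Delta x}$ is self-adjoint and positive definite on $L^2(\Omega)$, a spectral argument combined with the elementary inequality $|z^\beta+\lambda|\ge C_\delta \max(|z|^\beta,\lambda)$ (valid for $\lambda>0$ and $|\arg z^\beta|\le \pi-\delta$) gives the uniform-in-$\Delta x$ resolvent bound
\begin{equation*}
\|K(z)\|_{L^2(\Omega)\to L^2(\Omega)} \le C_\delta |z|^{-\beta}.
\end{equation*}
Theorem~\ref{th:LuOs} with $\alpha=\beta$ then produces the two remaining terms in the statement: the compatibility conditions $f^{(k)}(0)=0$ for $k<m$ kill the $t_n^{\beta+\ell-p}$ contributions with $\ell<m$, and the $|\log h|$ factor in Theorem~\ref{th:LuOs} is absorbed into the exponent to yield $h^{q+1+\beta}$.

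The main obstacle will be securing the sector hypothesis of Theorem~\ref{th:LuOs} when $\beta\in(1,2)$: although $K(z)$ is analytic precisely where $z^\beta\notin(-\infty,0]$, i.e.\ on the wedge $|\arg z|<\pi/\beta$, this wedge is strictly smaller than $|\arg z|<\pi-\delta$, so a sector centred at the origin will not suffice. The remedy is to shift the base point, taking $c>0$ large enough that $|\arg(z-c)|<\pi-\delta$ is contained in $|\arg z|<\pi/\beta$; on this shifted sector the resolvent estimate above remains valid uniformly in $\Delta x$ and Theorem~\ref{th:LuOs} can be invoked. Verifying this geometric restriction, together with the bookkeeping of the initial-layer terms killed by the compatibility assumption, is the principal technical content of the argument.
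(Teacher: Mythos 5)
Your overall route is the paper's: split the error through the semidiscrete solution by the triangle inequality, bound $\|u(t_n)-\uv(t_n)\|_{L^2(\Omega)}$ by Theorem~\ref{th:semi_disc}, and bound $\|\uv(t_n)-\uv^h_n\|_{L^2(\Omega)}$ by applying Theorem~\ref{th:LuOs} to the semidiscrete solution operator, using a resolvent bound uniform in $\Delta x$ and the identities $\uv=K_{\Delta x}(\partial_t)f$, $\uv^h=K_{\Delta x}(\partial_t^h)f$ (composition rule, $c_s=1$). The paper obtains the uniform bound by noting that the Galerkin problem satisfies the same estimates leading to \eqref{eq:res_est}; your spectral argument for the self-adjoint, positive definite $A_{\Delta x}$ is an equally valid way to get it.

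The step you call the principal technical content is, however, wrong as stated. Translating the vertex cannot make the sector $|\arg(z-c)|<\pi-\delta$ fit inside the wedge $|\arg z|<\pi/\beta$: for any $c$ the shifted sector contains rays $z=c+re^{\pm i\theta}$ with $\theta$ arbitrarily close to $\pi-\delta$, and along these $\arg z\to\pm\theta$ as $r\to\infty$, so if $\pi-\delta>\pi/\beta$ the claimed containment fails for every $c$. The correct resolution uses $\beta<2$: since $\pi/\beta>\pi/2$, the sector $|\arg z|<(\pi-\delta')/\beta$ in which the paper proves $\|K(z)\|\le C|z|^{-\beta}$ already has half-angle greater than $\pi/2$ for $\delta'$ small, i.e.\ it is of the form $|\arg z|<\pi-\delta$ with $\delta\in(0,\pi/2)$, so Theorem~\ref{th:LuOs} applies with $c=0$ and no shift is needed (or helpful). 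Two smaller caveats: your sharpened spatial estimate with $f^{(p)}$ tacitly uses $\widehat{f^{(p)}}(z)=z^p\hat f(z)$, which needs $f^{(k)}(0)=0$ for all $k<p$, more compatibility than assumed (the paper's own short proof elides this too), and $p>2\beta$ can fail for $\beta$ near $2$ even for two-stage Radau IIA with $p=3$.
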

\begin{proof}
  Denote by $K(z): f \mapsto u$ the solution operator of the Laplace transformed problem \eqref{eq:heat_s} and note the resolvent estimate
  \[
    \|K(z)\|_{L^2(\Omega) \leftarrow L^2(\Omega)} \leq C|z|^{-\beta}, \qquad |\arg (z )| < \min(\pi, (\pi-\delta)/\beta),
  \]
for any $\delta > 0$,  following from \eqref{eq:res_est}; see also \cite{Paz} and \cite{CuLuPa}. The same estimate holds for the solution operator $K_{\Delta x}: f \mapsto \uv$ of the Galerkin discretization in the space $X_{\Delta x}$. Note also that
  \[
    u(t) = K(\partial_t) f, \quad
    \uv(t) = K_{\Delta x}(\partial_t) f, \quad
    \Uv =  K_{\Delta x}(\ptv_t^{h}) f,\quad     \uv^h =  K_{\Delta x}(\partial_t^{h}) f.
  \]
  The second to last equality above follows from standard properties of convolution quadrature, see for example \cite[Section~4]{Lub94} and \cite[Chapter~9]{Say}, whereas the last one is simply the use of operational notation explained in Remark~\ref{rem:notation}. The result now follows from Theorem~\ref{th:LuOs},  Theorem~\ref{th:semi_disc}, and the triangle inequality.
\end{proof}

\subsection{Implementation and numerical experiments}\label{subsec:algFDE}
Though all the information needed for the implementation is given in the preceding pages, for the benefit of the reader we give some more detail here. Let $M$ denote the number of degrees of freedom in space, i.e., $M = \dim X_{\Delta x}$, let $\Bv$ and $\Av$ be the mass and stiffness matrices.
% \begin{equation}
%   \label{eq:heat_sd}
%   \begin{aligned}
%     \Bv \partial_t^\beta \uv(t) + \Av \uv(t) &= \fv(t), & &\text{for } t \in  [0,T],\\
%     \uv^{(k)}(0) &= 0, \qquad k = 0,\dots, m-1,
%       \end{aligned}
%     \end{equation}
%     where .

%  \red{It is well known that the
%     discrete Laplacian  $\Bv^{-1} \Av$ is a definite negative matrix. Thus the discrete resolvent satisfies
%     $$
%     \|(s^{\beta}\Iv_M-\Bv^{-1}\Av)^{-1} \|_2 \le \frac{1}{\sin \delta} |s|^{-\beta}, \qquad |\arg(s)|\le \pi-\delta,
%     $$
%      for every $0<\delta<\frac{\pi}{2}$. Thus we also expect and error like $O(h^p+h^{q+1+\beta})$ in the discrete $L^2$-norm after applying RK based CQ to the semidiscrete problem \eqref{eq:heat_sd}.
%      }

For simplicity of presentation we assume $\beta \in (0,1)$ and let $\Uv_n \in \bR^{sM}$ now denote the vector   $\Uv_n = [\Uv_{n,1},\dots, \Uv_{n,s}]^T$ with  $\Uv_{n,\ell} \approx \uv(t_{n,\ell})$, $\ell = 1,\dots, s$. Hence the fully discrete system can be written as a system of linear equations
    \begin{equation}
  \label{eq:heat_fd}
  \begin{aligned}
    \left((\ptv_t^h)^\beta\otimes \Bv  \right) \Uv  + \left( \Iv_{s}\otimes \Av\right) \Uv &= \Fv, & &\text{for } \tv_n \in  [0,T],
  \end{aligned}
\end{equation}
where $\Iv_D$ denotes the identity matrix of size $D \times D$ and $\Fv_j\in \bR^{sM}$.

Note that the composition rule allows us to write the CQ approximation to  $\partial_t^{\beta} y$, as
\[
   (\ptv_t^h)^\beta \Yv =      (\ptv_t^h)^{\beta-m}(\ptv_t^h)^m \Yv,
\]
with  $(\Yv_j)_{\ell} \approx y(t_{j,\ell})$ and $m = \lceil \beta \rceil$. As $\beta -m < 0$, the discrete version $(\ptv_t^h)^{\beta-1}$ of the fractional integral $\partial_t^{\beta-1} = \mathcal{I}^{1-\beta}$ can be evaluated by our fast algorithm, whereas  $(\ptv_t^h)^m$ is the standard one-step Runge-Kutta approximation of the derivative repeated $m$ times.

For simplicity of presentation  introduce  new variables $\Vv_j \in \bR^{M s}$ with
\[
  \Vv_n  =  \left(\ptv_t^h \otimes \Iv_M \right)\Uv(\tv_n).
\]
Note that
\[
  \Vv_n  = \left( \Iv_M \otimes {\bf D}_0  \right) \Uv_n+ \left( \Iv_M \otimes {\bf D}_1  \right) \Uv_{n-1}, \quad n=1,\dots,N,
\]
where from \eqref{eq:rksymbol} we have that
\[
  \Dv_0 = \frac1{\dt} \A^{-1}, \qquad
   \Dv_1 = \frac1{\dt} \A^{-1}\bone \bv^T \A^{-1}.
 \]

 Then the fully discrete system \eqref{eq:heat_fd} becomes
  \[
    \sum_{j = 0}^n \left( \Wv_{n-j} \otimes \Iv_M \right) \Vv_j + \left( \Iv_s\otimes \Bv  \right)^{-1} \left( \Iv_s \otimes \Av \right) \Uv_n = \left(  \Iv_s \otimes \Bv \right)^{-1} \Fv_n,
  \]
  where $\Wv_j$ are the weight matrices for the fractional integral $\mathcal{I}^\alpha$ with $\alpha = 1-\beta$.
  Rearranging terms so that the known vectors are on the right-hand side and denoting
  $$
 \mathcal{A}=\Iv_s\otimes \Av, \qquad  \mathcal{B}=\Iv_s \otimes \Bv,
  $$
  we obtain
  \[
    \left(\Wv_0 \otimes \Iv_M \right) \Vv_n+  \mathcal{B}^{-1}\mathcal{A} \Uv_n =  \mathcal{B}^{-1} \Fv_n  - \sum_{j = 0}^{n-1} \left( \Wv_{n-j} \otimes \Iv_M \right) \Vv_j
  \]
  or  using the definition of $\Vv_n$
  \[
    \left( \Wv_0 \Dv_0\otimes \Iv_M \right) \Uv_n+ \mathcal{B}^{-1}\mathcal{A} \Uv_n =  - \left(\Wv_0 {\bf D}_1 \otimes \Iv_M \right) \Uv_{n-1}- \mathcal{B}^{-1} \Fv_n  - \sum_{j = 0}^{n-1} \left(\Wv_{n-j}\otimes \Iv_M \right) \Vv_j.
  \]
  At each time step this system needs to be solved, where the expensive part  is the computation of the discrete convolution in the right-hand side and the storage of all the vectors $\Vv_j$. This problem is resolved by our fast method of evaluation of discrete convolutions with the following variation with respect to Section~\ref{sec:fastalg} in order to deal with the stages:
  \begin{align}
   \sum_{j = n_0+1}^{n} \left( \Wv_{j} \otimes \Iv_M \right) \Vv_{n-j}
  \approx \dt\sum_{k = 1}^{N_Q} w_k  (r(-\dt x_k))^{n_0} \Qv_{n-1,k}
  \end{align}
  with
  $$
  \Qv_{\ell,k}=\sum_{j = 0}^{\ell-n_0-1}(r(-\dt x_k))^{j}\left(  \left( \Iv_s + \dt x_k \A \right)^{-1} \bone \qv(-\dt x_k)  \otimes \Iv_M \right) \Vv_{\ell-n_0-1-j}
  $$
  satisfying the recursion
  $$
  \Qv_{\ell,k}= r(-\dt x_k)\Qv_{\ell-1,k}+ \left( \left( \Iv_s+ \dt x_k \A \right)^{-1} \bone \qv(-\dt x_k)\otimes \Iv_M \right) \fv_{\ell-n_0-1}, \quad \Qv_{n_0,k}=0.
  $$
  As a final point let us note that due to \eqref{powomega}
  \[
    \Wv_0 = \left( \frac{\Deltav(0)}{\dt} \right)^{-\alpha} = \dt^\alpha \A^{\alpha}
  \]
  and hence
  \[
    \Wv_0 \Dv_0 =  (\dt)^{\alpha-1}\A^{\alpha-1} = (\dt)^{-\beta}\A^{-\beta}.
    \]
As the spectrum of $\A^{-\beta}$ is in the right-half complex plane the problem to be solved at each time-step has a unique solution.

For the numerical experiments we let $\Omega$ be the square with corners $(-1,-1)$ and $(1,1)$ and choose $f$ so that the exact solution is
  \[
    u(x,t) = \sin^3 \left(\tfrac32\pi t \right)\cos \left(\tfrac12\pi x_1 \right) \cos\left(\tfrac12\pi x_2\right).
 \]
We let the final time be $T = 7$, fix the finite element space on a triangular mesh with meshwidth $\Delta x = 5 \times 10^{-3}$ and compute the error in the $L^2(\Omega)$ norm at $t = T$. The error and memory requirements as the number of time-steps is increased are given in  Table~\ref{tab:heat} for our new method and for the standard implementation of the CQ. We have used as tolerance $\tol = 10^{-4}$ and the 2-stage RadauIIA based CQ, for which the theory predicts convergence of order $O(\dt^3)$. We see that the error is the same for the two implementations of the CQ, achieving in both cases the predicted order 3, but that the memory requirements for the new method stay almost constant whereas for the standard implementation they grow linearly.

    % \begin{table}
    %   \centering
    %   \begin{tabular}{c|c|c|c|c}
    %     $N$ & error & memory (MB) & standard err.\ & standard mem.\ (MB)\\\hline
    %     25 & 0.094 & 4.4 &0.094 & 6.6\\
    %     50 & 0.14 & 4.4 & 0.14& 8.6\\
    %     100 & 0.044&4.6 & 0.044 & 12.5\\
    %     200 & .024 & 4.6 & 0.024 & 20.3\\
    %     400 & 0.013 & 4.6 & 0.013 & 35.8
    %   \end{tabular}
    %   \caption{error and memort}
    %   \label{tab:heat}
    % \end{table}

    % \begin{table}
    %   \centering
    %   \begin{tabular}{c|c|c|c|c}
    %     $N$ & error & memory (MB) & standard err.\ & standard mem.\ (MB)\\\hline
    %     16 & 0.20&35.4 & 0.20 & 39.5\\
    %     32 & 0.017 & 37.9 & 0.017 & 59.2\\
    %     64 & 0.0016 & 40.3 & 0.0016 & 98.7\\
    %     128 &  0.00019 & 41.6 & 0.00020 & 177.6\\
    %   \end{tabular}
    %   \caption{error and memort}
    %   \label{tab:heat}
    % \end{table}

        \begin{table}
      \centering
      \begin{tabular}{c|c|c|c|c}
        $N$ & error & memory (MB) & standard err.\ & standard mem.\ (MB)\\\hline
        32 & $2.94 \times 10^{-1}$&39.1 & $2.94 \times 10^{-1}$ & 59.2\\
        64 & $3.07 \times 10^{-2}$ & 40.3 & $3.07 \times 10^{-2}$ & 98.7\\
        128 & $2.61\times 10^{-3}$ & 42.8 & $2.61\times 10^{-3}$ & 177.6\\
        256 & $2.98\times 10^{-4}$   & 44.0 & $3.01\times 10^{-4}$  & 335.4
      \end{tabular}
      \caption{\small We show the error and the memory requirements for the new method and the standard implementation of CQ.}
      \label{tab:heat}
    \end{table}
    % time fast: 295, 631, 1305, 2650
    %times slow: 427, 1590, 5597, 21544

%\bibliographystyle{abbrv}
%\bibliography{fractional}

\begin{thebibliography}{10}

\bibitem{Suecos_2008}
K.~Adolfsson, M.~Enelund, and S.~Larsson.
\newblock Space-time discretization of an integro-differential equation
  modeling quasi-static fractional-order viscoelasticity.
\newblock {\em J. Vib. Control}, 14(9-10):1631--1649, 2008.

\bibitem{Ba18}
D.~Baffet.
\newblock A {G}auss-{J}acobi kernel compression scheme for fractional
  differential equations.
\newblock {\em arXiv:1801.06095}, 2018.

\bibitem{BaHe17}
D.~Baffet and J.~S. Hesthaven.
\newblock A kernel compression scheme for fractional differential equations.
\newblock {\em SIAM J. Numer. Anal.}, 55(2):496--520, 2017.

\bibitem{Ban10}
L.~Banjai.
\newblock Multistep and multistage convolution quadrature for the wave
  equation: Algorithms and experiments.
\newblock {\em SIAM J. Sci. Comput.}, 32(5):2964--2994, 2010.

\bibitem{BanLu_2011}
L.~Banjai and C.~Lubich.
\newblock An error analysis of {R}unge-{K}utta convolution quadrature.
\newblock {\em BIT}, 51(3):483--496, 2011.

\bibitem{BanLM}
L.~Banjai, C.~Lubich, and J.~M. Melenk.
\newblock Runge-{K}utta convolution quadrature for operators arising in wave
  propagation.
\newblock {\em Numer. Math.}, 119(1):1--20, 2011.

\bibitem{lb_ms}
L.~Banjai and M.~Schanz.
\newblock Wave propagation problems treated with convolution quadrature and
  {BEM}.
\newblock In U.~Langer, M.~Schanz, O.~Steinbach, and W.~L. Wendland, editors,
  {\em Fast Boundary Element Methods in Engineering and Industrial
  Applications}, volume~63 of {\em Lecture Notes in Applied and Computational
  Mechanics}, pages 145--184. Springer Berlin Heidelberg, 2012.

\bibitem{CuLuPa}
E.~Cuesta, C.~Lubich, and C.~Palencia.
\newblock Convolution quadrature time discretization of fractional
  diffusion-wave equations.
\newblock {\em Math. Comp.}, 75(254):673--696, 2006.

\bibitem{hals}
E.~Hairer, C.~Lubich, and M.~Schlichte.
\newblock Fast numerical solution of nonlinear {V}olterra convolution
  equations.
\newblock {\em SIAM J. Sci. Stat. Comput.}, 6(3):532--541, 1985.

\bibitem{HaWII}
E.~Hairer and G.~Wanner.
\newblock {\em Solving ordinary differential equations. {II}}, volume~14 of
  {\em Springer Series in Computational Mathematics}.
\newblock Springer-Verlag, Berlin, second edition, 1996.

\bibitem{Henrici_II}
P.~Henrici.
\newblock {\em Applied and computational complex analysis. {V}ol. 2}.
\newblock Wiley Interscience [John Wiley \& Sons], New York, 1977.

\bibitem{JiangZhang}
S.~Jiang, J.~Zhang, Q.~Zhang, and Z.~Zhang.
\newblock Fast evaluation of the {C}aputo fractional derivative and its
  applications to fractional diffusion equations.
\newblock {\em arXiv:1511.03453}, 2015.

\bibitem{Li}
J.-R. Li.
\newblock A fast time stepping method for evaluating fractional integrals.
\newblock {\em SIAM J. Sci. Comput.}, 31(6):4696--4714, 2009/10.

\bibitem{Lub_frac}
C.~Lubich.
\newblock Discretized fractional calculus.
\newblock {\em SIAM J. Math. Anal.}, 17(3):704--719, 1986.

\bibitem{Lu_88I}
C.~Lubich.
\newblock Convolution quadrature and discretized operational calculus {I}.
\newblock {\em Numer. Math.}, 52:129--145, 1988.

\bibitem{Lu_88II}
C.~Lubich.
\newblock Convolution quadrature and discretized operational calculus {II}.
\newblock {\em Numer. Math.}, 52:413--425, 1988.

\bibitem{Lub94}
C.~Lubich.
\newblock On the multistep time discretization of linear initial-boundary value
  problems and their boundary integral equations.
\newblock {\em Numer. Math.}, 67:365--389, 1994.

\bibitem{LubOst}
C.~Lubich and A.~Ostermann.
\newblock Runge-{K}utta methods for parabolic equations and convolution
  quadrature.
\newblock {\em Math. Comp.}, 60(201):105--131, 1993.

\bibitem{LuScha}
C.~Lubich and A.~Sch{\"a}dle.
\newblock Fast convolution for nonreflecting boundary conditions.
\newblock {\em SIAM J. Sci. Comput.}, 24(1):161--182, 2002.

\bibitem{NoOSa}
R.~H. Nochetto, E.~Ot\'arola, and A.~J. Salgado.
\newblock A {PDE} approach to space-time fractional parabolic problems.
\newblock {\em SIAM J. Numer. Anal.}, 54(2):848--873, 2016.

\bibitem{Paz}
A.~Pazy.
\newblock {\em Semigroups of linear operators and applications to partial
  differential equations}, volume~44 of {\em Applied Mathematical Sciences}.
\newblock Springer-Verlag, New York, 1983.

\bibitem{Say}
F.-J. Sayas.
\newblock {\em Retarded Potentials and Time Domain Boundary Integral
  Equations}, volume~50 of {\em Springer Series in Computational Mathematics}.
\newblock Springer, Heidelberg, 2016.

\bibitem{SchaLoLu}
A.~Sch{\"a}dle, M.~L{\'o}pez-Fern{\'a}ndez, and C.~Lubich.
\newblock Fast and oblivious convolution quadrature.
\newblock {\em SIAM J. Sci. Comput.}, 28(2):421--438, 2006.

\bibitem{Tre}
L.~N. Trefethen.
\newblock {\em Approximation theory and approximation practice}.
\newblock Society for Industrial and Applied Mathematics (SIAM), Philadelphia,
  PA, 2013.

\bibitem{Kar16}
Y.~Yu, P.~Perdikaris, and G.~E. Karniadakis.
\newblock Fractional modeling of viscoelasticity in 3{D} cerebral arteries and
  aneurysms.
\newblock {\em J. Comput. Phys.}, 323:219--242, 2016.

\bibitem{Yus}
S.~B. Yuste, L.~Acedo, and K.~Lindenberg.
\newblock Reaction front in an {$A+B \rightarrow C$} reaction-subdiffusion
  process.
\newblock {\em Phys. Rev. E}, 69:036126, Mar 2004.

\bibitem{Bu17}
F.~Zeng, I.~Turner, and K.~Burrage.
\newblock A stable fast time-stepping method for fractional integral and
  derivative operators.
\newblock {\em arXiv:1703.05480}, 2017.

\end{thebibliography}

\def\cprime{$'$}

\end{document}